\numberwithin{equation}{section}
\title{Non-existence of dead cores in fully nonlinear elliptic models}
\author{\it by \smallskip \\
 Jo\~{a}o V{i}tor da Silva\footnote{\noindent Departamento de Matem\'atica - Instituto de Ci\^{e}ncias Exatas - Universidade de Bras\'{i}lia. Campus Universit\'{a}rio Darcy Ribeiro, 70910-900, Bras\'{i}lia - Distrito Federal - Brazil. \noindent \texttt{E-mail address: J.V.Silva@mat.unb.br}} \footnote{\noindent Instituto de Investigaciones Matem\'{a}ticas Luis A. Santal\'{o} (IMAS) - CONICET (Argentine), Ciudad Universitaria, Pabell\'{o}n I (1428) Av. Cantilo s/n - Buenos Aires \texttt{E-mail address: jdasilva@dm.uba.ar}}, \,\,\, Disson dos Prazeres\footnote{\noindent Universidade Federal de Sergipe, Departmento de Matem\'{a}tica - Aracaju, Brazil. \noindent \texttt{E-mail address: disson@mat.ufs.br}}  \,\,\,\,$\&$ \,\,\,\, Humberto Ramos Quoirin\footnote{CIEM-Famaf, Universidad Nacional de C\'{o}rdoba, (5000)
C\'{o}rdoba, Argentina \noindent \texttt{E-mail address: humbertorq@gmail.com}}
}
\date{}
\newlength{\hchng}
\newlength{\vchng}
\def \R {\mathbb{R}}
\def \tr {\mathrm{Tr}}
\newcommand{\defeq}{\mathrel{\mathop:}=}
\newtheorem{Proof of Theorem a small}[]{Proof of Theorem (\ref{main result 1}) }
\newtheorem{theorem}{Theorem}[section]
\newtheorem{lemma}[theorem]{Lemma}
\newtheorem{corollary}[theorem]{Corollary}
\theoremstyle{definition}
\newtheorem{definition}[theorem]{Definition}
\newtheorem{example}[theorem]{Example}
\theoremstyle{remark}
\newtheorem{remark}[theorem]{Remark}
\numberwithin{equation}{section}
\newcommand{\intav}[1]{\mathchoice {\mathop{\vrule width 6pt height 3 pt depth  -2.5pt
\kern -8pt \intop}\nolimits_{\kern -6pt#1}} {\mathop{\vrule width
5pt height 3  pt depth -2.6pt \kern -6pt \intop}\nolimits_{#1}}
{\mathop{\vrule width 5pt height 3 pt depth -2.6pt \kern -6pt
\intop}\nolimits_{#1}} {\mathop{\vrule width 5pt height 3 pt depth
-2.6pt \kern -6pt \intop}\nolimits_{#1}}}
\begin{document}
\maketitle

\begin{abstract}
We investigate non-existence of nonnegative dead-core solutions for the problem $$|Du|^\gamma F(x, D^2u)+a(x)u^q = 0 \quad \mbox{in} \quad \Omega, \quad u=0 \quad \mbox{ on } \quad \partial\Omega.$$
Here $\Omega \subset \mathbb{R}^N$ is a bounded smooth domain, $F$ is a fully nonlinear elliptic operator, $a: \Omega \to \mathbb{R}$ is a sign-changing weight, $\gamma \geq 0$, and $0<q<\gamma+1$.
We show that this problem has no non-trivial dead core solutions if either $q$ is close enough to $\gamma+1$ or the negative part of $a$ is sufficiently small. In addition, we obtain the existence and uniqueness of a positive solution under these conditions on $q$ and $a$. Our results extend previous ones established in the semilinear case, and are new even for the simple model
$|D u(x)|^{\gamma} \mathrm{Tr}(\mathrm{A}(x) D^2 u(x)) + a(x)u^{q}(x) = 0$,
where $\mathrm{A} \in C^0(\Omega;Sym(N))$ is a uniformly elliptic and non-negative matrix.
\newline
\newline
\noindent \textbf{Keywords:} Fully nonlinear operators of degenerate type, dead core problems, existence/non-existence of positive solutions.
\newline
\noindent \textbf{AMS Subject Classifications: 35A01, 35D40, 35J15, 35J66, 35J70}
\end{abstract}

\section{Introduction}

Let $\Omega \subset \R^N$ be a bounded and smooth domain with $N \geq 1$. This work is devoted to the problem
\begin{equation}\tag{{\bf $\mathrm{P}_{a,q}$}}\label{MEq}
\left\{
\begin{array}{rcrcl}
|Du|^\gamma F(x, D^2u)+a(x)u^q& =& 0 & \mathrm{in} & \Omega\\
u &\geq& 0 & \mathrm{in} & \Omega\\
u& = & 0 & \mathrm{on} & \partial\Omega,
\end{array}
\right.
\end{equation}
where $\gamma \geq 0$, $q \in (0,\gamma+1)$, $a \in C(\overline{\Omega})$ changes sign, and $F: \Omega \times \mathrm{Sym}(N) \to \R$ is a second order fully nonlinear elliptic operator satisfying the following assumptions:

\begin{enumerate}
\item[{\bf(F1)}]\label{F1}[{\bf $(\lambda, \Lambda)$-Ellipticity condition}] There exist constants $ \Lambda \geq \lambda >0$ (known as \textit{ellipticity parameters}) such that for any $x \in \Omega$ and $X, Y \in\mathrm{Sym}(N) $, with $Y\ge 0$ there holds
$$
   \mathcal{M}_{\lambda, \Lambda}^{-}(Y)\leq F(x, X+Y)-F(x, X)\leq \mathcal{M}_{\lambda, \Lambda}^{+}(Y),
$$
where $\mathcal{M}^{\pm}_{\lambda,\Lambda}$ denote the \textit{Pucci's extremal operators}:
$$
     \mathcal{M}^{+}_{\lambda,\Lambda}(X) := \lambda \cdot \sum_{e_i <0} e_i(X)  + \Lambda \cdot \sum_{e_i >0} e_i(X) \quad \mbox{and} \quad \mathcal{M}^{-}_{\lambda,\Lambda}(X) := \lambda \cdot \sum_{e_i >0} e_i(X)  + \Lambda \cdot \sum_{e_i <0} e_i(X)
$$
and $\{e_i(X) : 1 \le i \le N\}$ are the eigenvalues of $X$.

\item[{\bf(F2)}]\label{F2}[{\bf $1$-Homogeneity condition}] For all $s \in \R^{\ast}$, $x \in \Omega$ and $X \in Sym(N)$ there holds
$$
  F(x, sX) = |s|F(x, X).
$$
\item[{\bf(F3)}]\label{F3}[{\bf Continuity condition}] There exists a modulus of continuity $\omega : [0, \infty) \to [0, \infty)$ with $\omega(0) = 0$ such that for all $(x, y,X) \in \Omega \times \Omega \times Sym(N)$
$$
      |F(x,X)-F(y,X)| \leq \omega(|x-y|)\|X\|.
$$

\end{enumerate}

Some examples of operators satisfying our assumptions are given next:
\begin{enumerate}
  \item {\bf Second order operators in non-divergence form:}
  $$
  F(x, D^2 u)  := \mathcal{L} u(x) = \sum_{i, j=1}^{N} a_{ij}(x)\partial_{ij} u(x),
$$
where $(a_{ij})_{i,j=1}^N$ is a uniformly elliptic and symmetric matrix with (uniformly) continuous coefficients.

  \item {\bf Hamilton-Jacobi-Bellman and Isaac's type operators:}

 The following Hamilton-Jacobi-Bellman (HJB in short) and Isaac's type operators, which appear in control problems for stochastic systems and in the theory of (zero-sum two-player) stochastic differential games, see \textit{e.g} \cite{BL, Lions1, Lions2, Lions3}:
 $$
      \displaystyle F(x, D^2 u) \defeq  \inf_{\hat{\alpha} \in \mathcal{A}}\left(\mathcal{L}_{\hat{\alpha}} u(x)\right) \quad \left(\text{respect.}\,\,\, \sup_{\hat{\alpha} \in \mathcal{A}} \left(\mathcal{L}_{\hat{\alpha}} u(x)\right)\right),
 $$
and
$$
      \displaystyle F(x, D^2 u) \defeq \sup_{\hat{\beta} \in \mathcal{B}} \inf_{\hat{\alpha} \in \mathcal{A}}\left(\mathcal{L}_{\hat{\alpha}\hat{\beta}} u(x)\right) \quad \left(\text{respect.}\,\,\,\inf_{\hat{\beta} \in \mathcal{B}} \sup_{\hat{\alpha} \in \mathcal{A}} \left(\mathcal{L}_{\hat{\alpha}\hat{\beta}} u(x)\right)\right).
$$
Here $\displaystyle \mathcal{L}_{\hat{\alpha}\hat{\beta}} u(x) = \sum_{i, j=1}^{N} a_{ij}^{\hat{\alpha}\hat{\beta}}(x)\partial_{ij} u(x)$ is a family of uniformly elliptic operators with coefficients enjoying a universal modulus of continuity and ellipticity constants $\Lambda\geq \lambda>0$.

  \item {\bf Pucci's extremal operators:}
$$
      \displaystyle \mathcal{M}_{\lambda, \Lambda}^{+}(X)  = \max\left\{\tr(AX): \lambda I\leq A \leq \Lambda I \quad \text{for}\quad A \in Sym(N)\right\}
$$
$$
\displaystyle \mathcal{M}_{\lambda, \Lambda}^{-}(X)  = \min\left\{\tr(AX): \lambda I\leq A \leq \Lambda I \quad \text{for}\quad A \in Sym(N)\right\}
$$

\item {\bf $p-$Laplacian type operators:}

Other interesting examples are given by the $p-$Laplacian operator (in non-divergence form) with $\gamma = p-2$ (for $p > 1$)
     $$
     G_p(\xi, X) = |\xi|^{\gamma}F_p(\xi, X)
     $$
where
$$
\begin{array}{ccl}
  F_p(\xi, X) & \defeq & \left(\tr(X)+(p-2)|\xi|^{-2}\left\langle X\xi, \xi\right\rangle\right) \\
   & = & \tr\left[\left(I+(p-2)\frac{\xi\otimes \xi}{|\xi|^2}\right)X\right].
\end{array}
$$
Furthermore, we can also quote non-variational generalizations of the $p-$Laplacian operator, with continuous coefficients, as follows:
$$
\begin{array}{ccl}
     G_{c_0}(x, \xi, X) & \defeq &  \tr(\mathcal{A}_1(x)X)-c_0|\xi|^{-2}\left\langle X\mathcal{A}_2(x)\xi \mathcal{A}_2(x) \xi\right\rangle\\
     &  = & \tr\left[\left(\mathcal{A}_1(x)+c_0\frac{\mathcal{A}_2(x)\xi\otimes \mathcal{A}_2(x)\xi}{|\xi|^2}\right)X\right].
\end{array}
$$
 where $\mathcal{A}_1, \mathcal{A}_2: \Omega \to \text{Sym}(N)$ are $\theta-$H\"{o}lderian functions with $\theta> \frac{1}{2}$, $\lambda I \leq \mathcal{A}_1 \leq  \Lambda I$,$- \sqrt{\lambda} I \leq \mathcal{A}_2 \leq \sqrt{\lambda} I$ and $c_0>-1$.

We refer the reader to \cite{ART15}, \cite{BD07}, \cite{BD09}, \cite{BD2}, \cite[Chapter 2]{CC95}, \cite{Trud82} and  references therein for a number of examples of fully nonlinear operators with such structural properties.
\end{enumerate}

Note that when $\gamma>0$ the operator $|Du|^\gamma F(x, D^2u)$ is degenerate in the gradient. Thanks to $(F1)$ this operator is $(\gamma+1)$-homogeneous and since $q<\gamma+1$, the reaction term in the equation is 'sub-homogeneous' in comparison with the operator. This fact, combined with the change of sign of $a$, prevents the use of the strong maximum principle and enables the existence of {\it dead core} solutions, i.e. nontrivial solutions vanishing in some part of the domain. It is worth pointing out that the condition $a^+ \not \equiv 0$ is necessary for the existence of nontrivial solutions of \eqref{MEq}. We refer to \cite{BdaL} for an essay on the validity of the strong maximum principle and \cite{BV, Diaz, DiazHern, DiazHerr, DiazVer} for several features on dead core type problems. 


Specifically, our main aim in this manuscript is to extend the positivity results established in \cite{KRQU}, where a simplified semilinear version of  \eqref{MEq} has been considered for $\Delta$, the Laplace operator. Based on a continuity argument, it was proved in  \cite[Theorems 1.1 and 1.3]{KRQU} that the positivity property obtained via the strong maximum principle still holds for nonnegative solutions of the equation
$$
  \Delta u+a(x)u^q=0 \quad \text{in} \quad \Omega
$$
as long as $a$ and $q$ nearly satisfy the conditions to apply this principle (i.e. $a \geq 0$ or $q=1$). Such continuity argument combines some ingredients that turn out to be available for \eqref{MEq} as well, namely: $C^{1,\alpha}$ regularity estimates up to the boundary, the sub-supersolution method, the strong maximum principle (with $q=\gamma+1$ or $a \geq 0$), and the homogeneity of the reaction term and the operator.

In view of the non-divergence nature of our equations, we are not allowed to employ the classical theory of weak solutions in Sobolev spaces. Solutions of \eqref{MEq} will then be considered in the viscosity sense, as follows:

\begin{definition}[{\bf Viscosity solutions}] Let  $G: \Omega \times \R \times \R^N \times \text{Sym}(N)$ be  continuous. Then $u \in C(\Omega)$ is a viscosity super-solution (respect. sub-solution) of
\begin{equation}\label{EqDiricProb}
G(x, u, Du, D^2 u) = 0 \quad \mbox{in} \quad \Omega
\end{equation}
if for every $x_0 \in \Omega$ and  $\phi \in C^2(\Omega)$ such that $u-\phi$ has a local minimum at $x_0$ (resp. local maximum at $x_0$), one has
      $$
      G(x_0, u(x_0), D\phi(x_0), D^2 \phi(x_0)) \leq  0 \quad (\mbox{respect.} \,\, \geq 0).
      $$
Finally, $u$ is said to be a viscosity solution of the above equation if it is simultaneously a viscosity super-solution and a viscosity sub-solution.
\end{definition}

A solution $u$ of \eqref{MEq} is said to be:
\begin{itemize}
\item non-trivial if $u \not \equiv 0$;
\item positive if $u>0$ in $\Omega$.
\item a {\it dead core} solution if it vanishes in an open subset of $\Omega$.
\end{itemize}

We state now our main results.
Our first result, proved by the method of sub-solution and super-solution, guarantees that \eqref{MEq} has a nontrivial solution.

\begin{theorem}[{\bf Existence of nontrivial solutions}]\label{existencia}
Assume $(F1)$ and $(F2)$. Then \eqref{MEq} has a nontrivial viscosity solution.
\end{theorem}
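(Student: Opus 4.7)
I will apply the method of sub- and super-solutions: construct an ordered pair $\underline{u}\le\overline{u}$ of nonnegative viscosity sub- and super-solutions of \eqref{MEq}, with $\underline{u}\not\equiv 0$ and $\underline{u}=\overline{u}=0$ on $\partial\Omega$, and then invoke Perron's method for viscosity solutions of the degenerate operator $|Du|^\gamma F(x,D^2u)$ to produce a viscosity solution sandwiched between them.

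For the super-solution, let $w\in C(\overline{\Omega})$ be the positive viscosity solution of the ``torsion'' problem
$$|Dw|^\gamma F(x,D^2w)=-1\ \text{in}\ \Omega,\qquad w=0\ \text{on}\ \partial\Omega,$$
whose existence follows from Perron's method applied to the coercive equation under $(F1)$--$(F3)$, together with the comparison principle. Set $\overline{u}=Cw$ with $C>0$ to be chosen. The $(\gamma+1)$-homogeneity of $|Du|^\gamma F$ granted by $(F2)$ yields $|D\overline{u}|^\gamma F(x,D^2\overline{u})=-C^{\gamma+1}$, so the super-solution inequality
$$|D\overline{u}|^\gamma F(x,D^2\overline{u})+a(x)\overline{u}^q\le 0\quad\text{in}\ \Omega$$
reduces to $C^{\gamma+1-q}\ge\|a\|_\infty\|w\|_\infty^q$, which holds for $C$ sufficiently large since $q<\gamma+1$.

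For the sub-solution, the necessary assumption $a^+\not\equiv 0$ allows me to fix a ball $B=B_\rho(x_0)$ compactly contained in $\Omega$ on which $a\ge a_0>0$. On $B$, I appeal to the principal eigenvalue theory for $(\gamma+1)$-homogeneous degenerate fully nonlinear operators (Birindelli--Demengel and successors) to produce $\mu_1>0$ and $\phi_1\in C(\overline{B})$ with $\phi_1>0$ in $B$, $\phi_1=0$ on $\partial B$, solving $|D\phi_1|^\gamma F(x,D^2\phi_1)+\mu_1\phi_1^{\gamma+1}=0$ in $B$. Setting $\underline{u}=\varepsilon\phi_1$ on $B$ and $\underline{u}=0$ on $\Omega\setminus B$, the sub-solution inequality on $B$ becomes
$$-\mu_1\varepsilon^{\gamma+1}\phi_1^{\gamma+1}+a(x)\varepsilon^q\phi_1^q\ge\varepsilon^q\phi_1^q\bigl(a_0-\mu_1\varepsilon^{\gamma+1-q}\|\phi_1\|_\infty^{\gamma+1-q}\bigr)\ge 0,$$
which holds for $\varepsilon$ small because $\gamma+1-q>0$. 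Since the zero function is trivially a viscosity sub-solution on $\Omega\setminus B$ and $\underline{u}$ is continuous across $\partial B$, the glued function is a global viscosity sub-solution of \eqref{MEq}. The ordering $\underline{u}\le\overline{u}$ in $\Omega$ is immediate from $\overline{u}=Cw\ge c_0>0$ on $\overline{B}$ together with $\|\underline{u}\|_\infty\to 0$ as $\varepsilon\to 0$. Perron's construction then yields a viscosity solution $u$ with $\underline{u}\le u\le\overline{u}$, automatically nontrivial, nonnegative and vanishing on $\partial\Omega$.

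The main obstacle is to justify, under the structural hypotheses $(F1)$--$(F3)$, the two nonstandard ingredients above: a comparison principle for the degenerate operator $|Du|^\gamma F(x,D^2u)$ (required both to solve the torsion problem and to run Perron between $\underline{u}$ and $\overline{u}$ despite the sub-homogeneous, sign-changing reaction $a(x)u^q$), and the existence of a positive principal eigenpair $(\mu_1,\phi_1)$ on a small ball for the $(\gamma+1)$-homogeneous operator. Both are available in the recent viscosity literature on such gradient-degenerate fully nonlinear operators; the delicate point is the careful tracking of the $(\gamma+1)$-homogeneity and the regularity of $\phi_1$ needed to glue the sub-solution across $\partial B$ while preserving the viscosity-sub-solution property.
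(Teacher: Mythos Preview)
Your proposal is correct and follows essentially the same route as the paper: the paper constructs the sub-solution as $\epsilon\phi^+$ (the scaled first eigenfunction on a ball $\mathcal{B}\subset\{a>0\}$) extended by zero, constructs the super-solution as $k\psi$ where $\psi$ solves $|D\psi|^\gamma F(x,D^2\psi)=-\|a\|_\infty$ in $\Omega$ with zero boundary data, and then invokes the Perron-type sub/super-solution lemma. The only cosmetic difference is your use of right-hand side $-1$ rather than $-\|a\|_\infty$ in the torsion problem, which is immaterial after scaling; the paper also verifies the gluing of $\underline{u}$ across $\partial\mathcal{B}$ directly by testing at points where $\underline{u}=0$ and where $\underline{u}>0$, which addresses the ``delicate point'' you flag.
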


 By adapting some ideas from \cite{CP09} and \cite{IS}, we are able to ensure that \eqref{MEq} has at most one positive solution.

\begin{theorem}[{\bf Uniqueness of positive solution}]\label{unicidade}
Assume $(F1)$ and $(F2)$. Then \eqref{MEq} has at most one positive solution.
\end{theorem}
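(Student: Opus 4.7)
The strategy is a scaling/touching argument that exploits the $(\gamma+1)$-homogeneity provided by $(F2)$ and the strict sub-homogeneity $q<\gamma+1$ of the reaction term, in the spirit of \cite{CP09} and \cite{IS}. Let $u,v$ be two positive viscosity solutions of \eqref{MEq}. First, I would establish boundary comparability: the global $C^{1,\alpha}$ regularity up to $\partial\Omega$ available for viscosity solutions of such degenerate operators, combined with a Hopf-type boundary point lemma (valid under $(F1)$ because $u,v>0$ in $\Omega$), yields constants $c,C>0$ such that $c\,d(x)\leq u(x),v(x)\leq C\,d(x)$ near $\partial\Omega$, where $d(x):=\mathrm{dist}(x,\partial\Omega)$. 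Consequently
$$\tau := \sup\{\,t>0 : tu\leq v \text{ in }\Omega\,\} \quad\text{and}\quad \sigma := \sup\{\,s>0 : sv\leq u \text{ in }\Omega\,\}$$
are well-defined positive numbers, and by the symmetric roles of $u$ and $v$ it suffices to prove $\tau\geq 1$; the same reasoning then gives $\sigma\geq 1$, whence $u\equiv v$.

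Arguing by contradiction, assume $\tau<1$ and set $w:=\tau u$. By $(F2)$,
$$|Dw|^\gamma F(x,D^2 w)+\tau^{\gamma+1-q}\,a(x)\,w^q=0 \quad\text{in }\Omega.$$
By the maximality of $\tau$ there is $x_0\in\overline{\Omega}$ with $w(x_0)=v(x_0)$ and $w\leq v$ in $\Omega$; the boundary estimates of the first step rule out $x_0\in\partial\Omega$, so the contact is interior. At this point the viscosity (Crandall--Ishii) inequalities together with the $(F1)$-ellipticity give
$$|Dv(x_0)|^\gamma F\bigl(x_0,D^2 v(x_0)\bigr)\ \geq\ |Dw(x_0)|^\gamma F\bigl(x_0,D^2 w(x_0)\bigr),$$
and substituting the two equations at $x_0$ (using $v(x_0)=w(x_0)$) reduces this to
$$a(x_0)\,v(x_0)^q\bigl(\tau^{\gamma+1-q}-1\bigr)\geq 0,$$
forcing $a(x_0)\leq 0$ since $\tau^{\gamma+1-q}<1$ and $v(x_0)>0$.

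The key obstacle is that this sign condition at a single point is not yet a contradiction, because $a$ is sign-changing. To finish, one propagates the identity $v\equiv w$ away from $x_0$: the nonnegative function $z:=v-w$ satisfies, on the set $\{|Dv|>0\}$, a viscosity linear elliptic inequality derived from the $\mathcal{M}^\pm_{\lambda,\Lambda}$-bounds of $(F1)$ (after dividing out the $|D\cdot|^\gamma$ factor), to which the strong minimum principle for degenerate fully nonlinear operators applies; this forces $z\equiv 0$ on the connected component of $x_0$. Together with $C^{1,\alpha}$-regularity up to the boundary (which, via Hopf, ensures that $\{|Dv|>0\}$ contains a neighborhood of $\partial\Omega$) and the hypothesis $a^+\not\equiv 0$, the equality extends to a point $x_1$ with $a(x_1)>0$, at which the touching analysis of the previous paragraph would yield $a(x_1)\leq 0$, a contradiction. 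This propagation across the sign-changing set is the most delicate technical step, and is precisely the reason the argument relies on the machinery of \cite{CP09, IS}.
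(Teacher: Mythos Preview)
Your outline diverges from the paper's proof, and the divergence is exactly where the sign-changing weight bites. The paper does \emph{not} use the $t$-scaling and touching argument; instead it performs the change of variables $w=\frac{1}{1-\bar q}\,u^{1-\bar q}$ with $\bar q=q/(1+\gamma)$ (Lemma~\ref{change of variables}), under which each positive solution is sent to a solution of
\[
|Dw|^\gamma F\!\left(x,\,D^2w+\tfrac{q}{1+\gamma-q}\,\frac{Dw\otimes Dw}{w}\right)+a(x)=0.
\]
The point is that the zeroth-order term is now $a(x)$, \emph{independent of $w$}. For two solutions $w_1,w_2$, at a maximum of $w_1-w_2$ the two inequalities subtract and the $a(x)$ terms cancel identically, regardless of the sign of $a$. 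What remains is a pure Pucci inequality $\mathcal{M}^+_{\lambda,\Lambda}(D^2(w_1-w_2))\geq 0$ on $\{w_1>w_2\}$, and the ordinary maximum principle finishes. The whole difficulty posed by the indefinite weight is absorbed by the change of variables.

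Your Step~6, by contrast, is a genuine gap. After the contact-point computation you correctly observe that you only get $a(x_0)\leq 0$, which is no contradiction. The propagation you sketch does not go through as written: away from the contact point $Dv$ and $Dw$ need not coincide, so you cannot ``divide out the $|D\cdot|^\gamma$ factor'' and there is no clean linear (or Pucci-type) inequality for $z=v-w$; the difference of the two equations carries a residual term $a(x)\bigl(\tau^{\gamma+1-q}w^q-v^q\bigr)$ whose sign is uncontrolled precisely on $\{a<0\}$. Nor is there a ready strong minimum principle for $z$ in this degenerate, sign-changing setting. Invoking \cite{CP09,IS} does not rescue this step: what those references supply (and what the paper borrows) is the transformation $u\mapsto w$ and the test-function/projection device used to handle the degenerate gradient factor in the Pucci comparison---not a propagation argument for the untransformed equation. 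If you want to repair your route, the cleanest fix is to adopt the substitution $w=c\,u^{1-\bar q}$ first; the scaling argument then becomes unnecessary.
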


Let us set $\Omega^+ \defeq \{x \in \Omega: a(x)>0\}$ and
\[
\mathcal{P}^{\circ}(\Omega)\defeq \left\{  u\in C_{0}^{1}(\overline{\Omega
}):u>0 \,\,\, \text{ in } \,\,\, \Omega,  \text{ \ and } \,\,\, \frac{\partial u}{\partial {\nu}} <0 \,\,\,\text{
on }\partial\Omega\right\},
\]
which is the interior of the \textit{positivity cone} of $C_{0}^{1}(\overline{\Omega
})$, where $\frac{\partial u}{\partial {\nu}}$ denotes the exterior normal derivative.

The next theorem states that \eqref{MEq} has no non-trivial dead-core solution if either $\|a^-\|_\infty$ is sufficiently small or $q$ is close enough to $\gamma +1$.

\begin{theorem}[{\bf Non-existence of dead-core solutions}]\label{main result 1}
Assume $(F1)$, $(F2)$, $(F3)$, and that $\Omega_+$ has finitely many connected components. Then:
\begin{enumerate}
\item There exists $\delta = \delta (N, \lambda, \Lambda, \gamma, q, a^{+}, \Omega)>0$ such that every nontrivial  solution of \eqref{MEq} belongs to $\mathcal{P}^{\circ}(\Omega)$ if $\|a^{-}\|_{\infty}<\delta$.
\item There exists $q_0=q_0(N, a, \gamma, \lambda, \Lambda, \Omega)\in [0,\gamma + 1)$ such that every nontrivial  solution of \eqref{MEq} belongs to $\mathcal{P}^{\circ}(\Omega)$ if $q\in(q_0, \gamma + 1]$.
\end{enumerate}
 \end{theorem}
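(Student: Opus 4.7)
The plan is to argue by contradiction via a continuity argument, in the spirit of \cite{KRQU}. The limiting cases $a^- \equiv 0$ in (1) and $q = \gamma+1$ in (2) follow directly from the strong maximum principle, so it suffices to rule out non-trivial dead-core solutions for parameters close to, but different from, these limits. Suppose the conclusion fails: in case (1) there exist $a_n \in C(\overline{\Omega})$ with $a_n^+ = a^+$ and $\|a_n^-\|_\infty \to 0$, together with non-trivial dead-core solutions $u_n$ of the associated problem; in case (2) there exist $q_n \uparrow \gamma+1$ with $q_n < \gamma+1$ and non-trivial dead-core solutions $u_n$ of $(\mathrm{P}_{a,q_n})$. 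A super-solution $\bar u$ for the ``limiting'' problem, constructible by the techniques in the proof of Theorem \ref{existencia}, dominates every $u_n$ (by monotonicity in the weight in case (1), and via an ansatz with $\bar u \geq 1$ in case (2)), providing a uniform bound $\|u_n\|_\infty \leq C$. Combined with the $C^{1,\alpha}(\overline{\Omega})$ regularity estimates up to the boundary for the degenerate operator $|Du|^\gamma F(x,D^2u)$ available under $(F1)$-$(F3)$, this yields a uniform bound $\|u_n\|_{C^{1,\alpha}(\overline{\Omega})} \leq C$. Up to a subsequence, $u_n \to u_\infty$ in $C^1(\overline{\Omega})$, and by stability of viscosity solutions $u_\infty$ satisfies the limit problem: in case (1), $|Du_\infty|^\gamma F(x,D^2u_\infty)+a^+(x)u_\infty^{q}=0$; in case (2), $|Du_\infty|^\gamma F(x,D^2u_\infty)+a(x)u_\infty^{\gamma+1}=0$; both with $u_\infty = 0$ on $\partial\Omega$.

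The decisive step is to show $u_\infty \not\equiv 0$. First I would argue that some connected component $\omega$ of $\Omega_+$ supports non-zero values of $u_n$ for infinitely many $n$: if not, $u_n$ vanishes on $\Omega_+$ for large $n$, and then on $\Omega \setminus \overline{\Omega_+}$ the equation forces $|Du_n|^\gamma F(\cdot,D^2u_n) \geq 0$, so the maximum principle together with $u_n = 0$ on $\partial(\Omega\setminus\overline{\Omega_+})$ gives $u_n \equiv 0$, contradicting non-triviality. The finiteness hypothesis on the components of $\Omega_+$ lets the same $\omega$ be chosen along a subsequence. Fix $\omega' \Subset \omega$ with $\inf_{\omega'} a > 0$ and let $\phi_1 > 0$ be a principal eigenfunction of $|D\cdot|^\gamma F$ on $\omega'$ furnished by the Birindelli-Demengel eigenvalue theory. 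Setting $\underline u := \varepsilon\phi_1$, the $(\gamma+1)$-homogeneity from $(F2)$ together with the sub-homogeneity $q < \gamma+1$ allows one to choose $\varepsilon > 0$ so small that $\underline u$ is a subsolution of every $(\mathrm{P}_{a_n,q})$ (resp.\ $(\mathrm{P}_{a,q_n})$), uniformly in $n$. A comparison principle for the $(\lambda,\Lambda)$-elliptic operator $|Du|^\gamma F$ (Birindelli-Demengel) then delivers $u_n \geq \underline u$ on $\omega'$, producing the uniform positive lower bound needed to conclude $u_\infty \not\equiv 0$.

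With non-triviality secured, the strong maximum principle applies to the limit equation in both cases. In case (1), the right-hand side $-a^+(x)u_\infty^q$ is non-positive, so the SMP for supersolutions of the degenerate operator yields $u_\infty > 0$ in $\Omega$ and $\partial u_\infty/\partial\nu < 0$ on $\partial\Omega$. In case (2), rewriting the equation as $|Du_\infty|^\gamma F(x,D^2u_\infty) = -\bigl(a(x)u_\infty^\gamma\bigr)u_\infty$ exhibits a bounded zero-order coefficient, and the corresponding ``linearized'' SMP (valid at $q=\gamma+1$) yields the same conclusion. Hence $u_\infty \in \mathcal{P}^{\circ}(\Omega)$. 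The $C^1(\overline{\Omega})$-convergence $u_n \to u_\infty$ then transmits both strict positivity in $\Omega$ and a strictly negative exterior normal derivative on $\partial\Omega$ to $u_n$ for all large $n$, so $u_n \in \mathcal{P}^{\circ}(\Omega)$, contradicting the dead-core assumption.

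The main obstacle I anticipate is the non-triviality argument: producing a subsolution valid uniformly in the varying parameters and invoking an appropriate comparison principle in the presence of the gradient degeneracy $|Du|^\gamma$. Case (2) is particularly delicate, because the subsolution inequality $a \geq \varepsilon^{\gamma+1-q_n}\lambda_1(\omega')\phi_1^{\gamma+1-q_n}$ loses its smallness margin as $q_n \to \gamma+1$; I expect this to require choosing $\omega'$ so that $\lambda_1(\omega')$ is small compared with $\inf_{\omega'}a$, or refining $\underline u$ by incorporating a suitable power of $\phi_1$.
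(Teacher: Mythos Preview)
Your overall strategy---contradiction, $C^{1,\alpha}$ compactness, stability, and the strong maximum/Hopf principle on the limit equation---matches the paper's, and your non-triviality argument (positivity on a component of $\Omega_+$, then a lower barrier built from a first eigenfunction on a ball) is essentially the content of the two lemmas opening Section~\ref{SecProofMR}.

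The genuine gap is the a priori upper bound. You assert that a super-solution $\bar u$ of the limiting problem ``dominates every $u_n$'', but this needs a comparison principle for $|Du|^\gamma F(x,D^2u)+a(x)u^q=0$ between a sub- and a super-solution. With a sublinear zero-order term ($q<\gamma+1$) and an indefinite weight, no such comparison is available; the paper proves uniqueness (Theorem~\ref{unicidade}) only for \emph{positive} solutions, via a nontrivial change of variables, and never claims that an arbitrary nonnegative solution lies below the constructed super-solution. Monotonicity in the weight only shows that $\bar u$ remains a super-solution for each $a_n$, not that $u_n\le\bar u$; and in case~(2) the ansatz $\bar u\ge 1$ already fails to keep $\bar u$ a super-solution where $a<0$, since there $a\,\bar u^{q_n}\ge a\,\bar u^{\gamma+1}$.

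The paper avoids this entirely by splitting into $\|u_n\|_\infty$ bounded versus $\|u_n\|_\infty\to\infty$. In the unbounded case one normalizes $v_n=u_n/\|u_n\|_\infty$; the right-hand side becomes $-a_n v_n^{q_n}/\|u_n\|_\infty^{\gamma+1-q_n}$, and the limit $v_0$ (with $\|v_0\|_\infty=1$, hence automatically nontrivial---no lower barrier needed in this branch) solves a homogeneous or $(\gamma{+}1)$-homogeneous equation to which the Hopf principle applies directly. In case~(2)(b) a further split on whether $\|u_n\|_\infty^{\gamma+1-q_n}$ stays bounded is required. As for your flagged concern about the barrier as $q_n\to\gamma+1$: the paper resolves it by the scaling remark after Lemma~\ref{l2}---replacing $a$ by $ca$ (equivalently $u$ by $c^{1/(1+\gamma-q)}u$) one may assume $\min_{\overline{\mathcal B}}a>\lambda^+(\mathcal B)$, so that $\epsilon_\theta=\bigl(a_0/\lambda^+-\theta\bigr)^{1/(\gamma+1-q)}>1$ for all $q\in(0,\gamma+1)$, which is exactly your instinct to arrange $\lambda_1(\omega')<\inf_{\omega'}a$.
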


Thanks to the previous results, we deduce the existence and uniqueness of a positive solution under the aforementioned conditions on $a$ and $q$.

\begin{corollary}[{\bf Existence and uniqueness of nontrivial solution}]
Under the assumptions of Theorem  \ref{main result 1} the problem \eqref{MEq} has a unique nontrivial solution (which belongs to $\mathcal{P}^{\circ}(\Omega)$) if either $\|a^{-}\|_{\infty}<\delta$ or $q\in(q_0, \gamma + 1)$.
\end{corollary}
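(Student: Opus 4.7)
The plan is to assemble the three preceding results with essentially no new argument. First, I would invoke Theorem~\ref{existencia} to produce a nontrivial viscosity solution $u$ of \eqref{MEq}: the required hypotheses $(F1)$ and $(F2)$ are built into the hypotheses of Theorem~\ref{main result 1}, hence are available here. This step alone gives the existence part.

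Next, I would appeal to Theorem~\ref{main result 1} to upgrade the nontrivial solution $u$ into a strictly positive one. Indeed, the assumption of the corollary---``$\|a^-\|_\infty<\delta$ or $q\in(q_0,\gamma+1)$''---falls exactly within the hypotheses of item (1) or item (2) of Theorem~\ref{main result 1} (note that $(q_0,\gamma+1)\subset(q_0,\gamma+1]$, and that the endpoint $q=\gamma+1$ is outside the admissible range of \eqref{MEq}, so restricting to $q<\gamma+1$ is automatic and entails no loss). Thus every nontrivial viscosity solution of \eqref{MEq}, and in particular $u$, belongs to $\mathcal{P}^{\circ}(\Omega)$. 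By the very definition of this cone, any such solution satisfies $u>0$ in $\Omega$.

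Finally, I would apply Theorem~\ref{unicidade}, which provides at most one \emph{positive} viscosity solution of \eqref{MEq}. Combined with the previous step---namely, that under the standing hypotheses every nontrivial solution is automatically positive---this gives uniqueness among nontrivial solutions. Joining this with the existence produced by Theorem~\ref{existencia}, we obtain a unique nontrivial solution, which moreover lies in $\mathcal{P}^{\circ}(\Omega)$, exactly as claimed.

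No step poses a genuine obstacle: the proof is a pure chaining of Theorems~\ref{existencia}, \ref{unicidade}, and \ref{main result 1}. The only points worth checking explicitly are (i) the compatibility of the parameter ranges between the corollary and Theorem~\ref{main result 1}(2), and (ii) the elementary inclusion $\mathcal{P}^{\circ}(\Omega)\subset\{u\in C(\overline{\Omega}) : u>0 \text{ in } \Omega\}$, which allows one to deduce strict positivity of any nontrivial solution directly from its membership in $\mathcal{P}^{\circ}(\Omega)$ and thereby to invoke the uniqueness result.
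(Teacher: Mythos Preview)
Your proposal is correct and matches the paper's own approach: the corollary is stated immediately after Theorem~\ref{main result 1} with the sentence ``Thanks to the previous results, we deduce the existence and uniqueness of a positive solution under the aforementioned conditions on $a$ and $q$,'' and no further proof is given. Your chaining of Theorems~\ref{existencia}, \ref{main result 1}, and \ref{unicidade} is exactly the intended argument.
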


To the best of our knowledge and as far as degenerate elliptic models in non-divergence form are concerned, Theorems \ref{existencia}, \ref{unicidade}, and \ref{main result 1} are a novelty even for the model $$
\left\{
\begin{array}[c]{rclcl}
  |Du|^{\gamma}\tr(\mathrm{A}(x)D^2 u) +a(x)u(x)^q& = & 0 & \text{in} & \Omega\\
  u &\geq& 0 & \mathrm{in} & \Omega\\
  u& = & 0 & \text{on} & \partial \Omega
\end{array}
\right.
$$
with $\mathrm{A} \in C^0(\Omega;Sym(N))$ a uniformly elliptic and non-negative matrix.

Our paper is organized as follows: in Section \ref{Prelim} we recall some well-known results concerning the operators considered here. Such results help us in the proofs of our main results. In section \ref{SecExist} we prove the existence theorem and in section \ref{SecUnique} we prove the uniqueness of positive solutions. Finally, in section \ref{SecProofMR} we prove of our main result  and give an example of a problem with a dead-core solution.

\section{Preliminaries}\label{Prelim}

\hspace{0.6cm}Throughout this section we recall the results we shall rely on.
Let us start with a stability result, which claims that the limit of a sequence of viscosity solutions turns out to be a viscosity solution of the limiting equation, whose proof can be found in \cite[Corollary 4.3]{BD3} and \cite[Proposition 2.1]{Ishii89} .

\begin{lemma}[{\bf Stability}] Let $G_k \in C(\Omega \times \R \times \R^N \times \text{Sym}(N))$ and $u_k \in C(\overline{\Omega})$ be viscosity subsolutions (resp. supersolutions) of
$G_k(x, u, Du, D^2u) = 0$ in $\Omega$ for $k \in \mathbb{N}$. Assume also that $G_k(x, r, p, X) \to G(x, r, p, X)$ uniformly on compacts subsets of $\Omega \times \R \times \R^N \times \text{Sym}(N)$ and $u_k \to u$ uniformly on $\overline{\Omega}$ for some functions $G$ and $u$, as $k \to \infty$. Then $u$ is a viscosity subsolution (resp. supersolution) of \eqref{EqDiricProb}.
\end{lemma}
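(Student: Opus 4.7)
The plan is to prove the subsolution case (the supersolution case is symmetric). I would follow the classical Crandall--Ishii--Lions stability argument: reduce to a strict local maximum by a quadratic perturbation, extract approximate maximizers for $u_k$, and pass to the limit in the viscosity inequality using the joint uniform convergence of $u_k$ and $G_k$.

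First I fix $x_0\in\Omega$ and $\phi\in C^2(\Omega)$ such that $u-\phi$ has a local maximum at $x_0$. By replacing $\phi$ with $\tilde\phi(x)\defeq \phi(x)+|x-x_0|^4$, I may assume the maximum is \emph{strict} on some closed ball $\overline{B_r(x_0)}\subset\Omega$, and moreover that $(u-\tilde\phi)(x_0) > (u-\tilde\phi)(x)$ for all $x\in\overline{B_r(x_0)}\setminus\{x_0\}$. This strictness is what allows me to localize the maximizers of the perturbed functions.

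Next, consider the sequence $u_k-\tilde\phi\in C(\overline{B_r(x_0)})$. By compactness, it attains its maximum on $\overline{B_r(x_0)}$ at some point $x_k$. I claim $x_k\to x_0$. Indeed, up to a subsequence $x_k\to x_\infty\in\overline{B_r(x_0)}$; since $u_k\to u$ uniformly on $\overline\Omega$,
\begin{equation*}
(u-\tilde\phi)(x_\infty)=\lim_{k\to\infty}(u_k-\tilde\phi)(x_k)\geq \lim_{k\to\infty}(u_k-\tilde\phi)(x_0)=(u-\tilde\phi)(x_0),
\end{equation*}
and the strict maximum property forces $x_\infty=x_0$. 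In particular, for all large $k$, $x_k$ lies in the interior of $B_r(x_0)$, hence is a genuine local maximum point of $u_k-\tilde\phi$. Since $u_k$ is a viscosity subsolution of $G_k(\cdot,u_k,Du_k,D^2u_k)=0$, the defining inequality gives
\begin{equation*}
G_k\bigl(x_k,u_k(x_k),D\tilde\phi(x_k),D^2\tilde\phi(x_k)\bigr)\geq 0.
\end{equation*}

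Finally I pass to the limit. The arguments $(x_k,u_k(x_k),D\tilde\phi(x_k),D^2\tilde\phi(x_k))$ converge to $(x_0,u(x_0),D\tilde\phi(x_0),D^2\tilde\phi(x_0))=(x_0,u(x_0),D\phi(x_0),D^2\phi(x_0))$, and they remain inside a fixed compact subset of $\Omega\times\mathbb{R}\times\mathbb{R}^N\times\mathrm{Sym}(N)$. The uniform convergence $G_k\to G$ on this compact set, together with continuity of $G$, yields
\begin{equation*}
G\bigl(x_0,u(x_0),D\phi(x_0),D^2\phi(x_0)\bigr)=\lim_{k\to\infty}G_k\bigl(x_k,u_k(x_k),D\tilde\phi(x_k),D^2\tilde\phi(x_k)\bigr)\geq 0,
\end{equation*}
which is exactly the viscosity subsolution inequality for $u$ at $x_0$ with test function $\phi$. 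The supersolution case is completely analogous, replacing maxima by minima.

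The main obstacle I expect is the localization step: one must ensure that maximizers $x_k$ of the perturbed functions $u_k-\tilde\phi$ do not escape to the boundary of the test ball, which is precisely why the quadratic (or quartic) perturbation that turns the maximum into a strict one is essential. Once this is in place, the rest is a routine application of uniform convergence. One should also be mindful that the definition only requires testing against $C^2$ functions where $u-\phi$ attains a local (not strict) extremum, so the reduction to the strict case is a modeling choice that costs nothing but is indispensable for the compactness argument.
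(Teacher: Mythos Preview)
Your argument is correct and is precisely the classical stability proof (strict-maximum perturbation, localization of approximate maximizers, and passage to the limit via uniform convergence of $G_k$ on compacts). Note that the paper does not supply its own proof of this lemma; it simply cites \cite{BD3} and \cite{Ishii89}, whose arguments are essentially the one you wrote. One cosmetic remark: in your opening plan you say ``quadratic perturbation'' but then (correctly) use the quartic $|x-x_0|^4$ so that $D\tilde\phi(x_0)=D\phi(x_0)$ and $D^2\tilde\phi(x_0)=D^2\phi(x_0)$; you may want to make the wording consistent.
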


The following result states the existence of viscosity solutions via the sub-supersolutions method, cf. \cite[Proposition 2.3]{Ishii89}.

\begin{lemma}[{\bf Existence of solutions via sub-supersolutions}]\label{ExistSol} Let $G \in C(\Omega \times \R \times \R^N \times \text{Sym}(N))$ and suppose that there is a viscosity subsolution $\underline{u}$ and a viscosity supersolution $\overline{u}$ of
\eqref{EqDiricProb}  satisfying  $\underline{u} \leq \overline{u}$ and $\underline{u}, \overline{u} \in C(\overline{\Omega})$. Then  \eqref{EqDiricProb} has a viscosity solution satisfying $\underline{u} \leq u \leq \overline{u}$.
\end{lemma}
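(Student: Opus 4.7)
The plan is to invoke Perron's method in the spirit of Ishii. Introduce the admissible class
$$\mathcal{S} \defeq \{v \in C(\overline{\Omega}) : \underline{u} \le v \le \overline{u} \text{ on } \overline{\Omega},\ v \text{ is a viscosity subsolution of } G=0 \text{ in } \Omega\},$$
which contains $\underline{u}$ and is therefore nonempty, and set
$$u(x) \defeq \sup\{v(x) : v \in \mathcal{S}\}, \qquad x \in \overline{\Omega},$$
so that $\underline{u} \le u \le \overline{u}$. The goal is to show that $u$ is the desired viscosity solution.

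First I would prove that the upper semicontinuous envelope $u^{\ast}$ is itself a viscosity subsolution. At a point $x_0$ where $u^{\ast}-\phi$ attains a strict local maximum, a diagonal extraction produces $v_k \in \mathcal{S}$ and $x_k \to x_0$ with $v_k(x_k) \to u^{\ast}(x_0)$, and $v_k - \phi$ attains a local maximum at some $y_k$ near $x_k$. The subsolution property gives
$$G(y_k, v_k(y_k), D\phi(y_k), D^2\phi(y_k)) \le 0,$$
and the continuity of $G$ together with the continuity of $D\phi, D^2\phi$ passes to the limit to yield the required inequality for $u^{\ast}$ at $x_0$.

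The heart of the argument is the bump construction showing that the lower semicontinuous envelope $u_{\ast}$ is a viscosity supersolution. Arguing by contradiction, suppose at some interior $x_0$ there is $\phi \in C^2$ with $u_{\ast}-\phi$ attaining a strict local minimum and $G(x_0, u_{\ast}(x_0), D\phi(x_0), D^2\phi(x_0))<0$. By continuity of $G$ one would find $\varepsilon, r>0$ with
$$G(x, \phi(x)+\varepsilon, D\phi(x), D^2\phi(x)) \le 0 \quad \text{for all } x \in B_r(x_0).$$
The candidate
$$w(x) \defeq \begin{cases} \max\{u(x),\, \phi(x)+\varepsilon\}, & x \in B_r(x_0),\\ u(x), & x \in \Omega \setminus B_r(x_0), \end{cases}$$
would then belong to $\mathcal{S}$, since the pointwise maximum of two viscosity subsolutions remains a subsolution, and $\varepsilon$ can be shrunk so that $w \le \overline{u}$ (using $u_{\ast}(x_0) \le \overline{u}(x_0)$ and continuity). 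However the strict local minimum of $u_{\ast}-\phi$ guarantees $w>u$ somewhere near $x_0$, contradicting the maximality of $u$.

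Finally, one has $\underline{u} \le u_{\ast} \le u \le u^{\ast} \le \overline{u}$ with $u^{\ast}$ a subsolution and $u_{\ast}$ a supersolution of $G=0$; a comparison principle for $G$ (available in the setting of $(F1)$--$(F3)$) then forces $u^{\ast} \le u_{\ast}$, whence $u^{\ast} = u_{\ast} = u \in C(\overline{\Omega})$ and $u$ is a viscosity solution squeezed between the barriers. The main obstacle I anticipate is the bump step: one must choose $\varepsilon$ small enough that the modified function $w$ respects the upper barrier $\overline{u}$ while still strictly exceeding $u$ somewhere, which requires combining the strict inequality $G<0$ with the strict local minimum of $u_{\ast}-\phi$, and one must verify that pointwise maxima of viscosity subsolutions remain subsolutions in the present degenerate setting.
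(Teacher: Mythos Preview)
The paper does not supply its own proof of this lemma; it is stated as a recalled result with the citation ``cf.\ \cite[Proposition 2.3]{Ishii89}''. Your outline is precisely Perron's method as developed in that reference, so your approach coincides with what the paper invokes.

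One point deserves comment. Your final step appeals to a comparison principle to force $u^{\ast}=u_{\ast}$, and you justify this by saying it is ``available in the setting of $(F1)$--$(F3)$''. Note, however, that the lemma is stated for an arbitrary continuous $G$, with no structural hypotheses; Ishii's Proposition~2.3 likewise carries an explicit comparison hypothesis rather than deriving it. In the paper's application (Section~\ref{SecExist}) the relevant operator is $G(x,r,p,X)=|p|^{\gamma}F(x,X)+a(x)r^{q}$, and comparison for this degenerate, non-proper operator (the zero-order term $a(x)r^{q}$ need not be monotone in $r$ since $a$ changes sign) is not automatic from $(F1)$--$(F3)$. So your sketch is faithful to the cited argument, but be aware that the comparison step is an assumption built into Ishii's result, not something you can manufacture from the ellipticity and homogeneity conditions alone.
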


In \cite{BD07} Birindelli and Demengel proved the following maximum principle result:

\begin{lemma}[{\bf Maximum Principle}]
Assume $(F1)$ and $(F2)$. Let $u$ be a positive viscosity solution of
$|Du|^\gamma F(x,D^2u) - ku^{1+\gamma}\leq 0$ in  $\Omega$,
for some constant $k$. Suppose that $u(x_0) = 0$ for some $x_0\in\partial\Omega$ such that $\Omega$ satisfies the interior sphere condition at $x_0$. Then there exists $M > 0$ (depending only on the structural data) such that
$u(x)> Md_{\partial\Omega}(x)$,
where $d_{\partial\Omega}$ is the distance to the boundary.
\end{lemma}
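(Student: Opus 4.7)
The plan is to establish a Hopf-type boundary lemma by constructing an explicit barrier in a small interior ball tangent to $\partial\Omega$ at $x_0$, and then invoking the comparison principle for the degenerate operator $|Du|^\gamma F(x,D^2u)$.

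First, I would use the interior sphere condition to fix a ball $B_R(y_0)\subset\Omega$ with $\overline{B_R(y_0)}\cap\partial\Omega=\{x_0\}$, and work on the annulus $\mathcal{A}:=B_R(y_0)\setminus\overline{B_{R/2}(y_0)}$. The barrier candidate is the classical one:
\[
v(x)\;=\;c\bigl(e^{-\alpha r^2}-e^{-\alpha R^2}\bigr),\qquad r=|x-y_0|,
\]
with $c,\alpha>0$ to be chosen. A direct computation gives $|Dv|=2\alpha c\,r\,e^{-\alpha r^2}$ and identifies the Hessian as having one ``radial'' eigenvalue $2\alpha c\,e^{-\alpha r^2}(2\alpha r^2-1)$ and $N-1$ ``tangential'' eigenvalues equal to $-2\alpha c\,e^{-\alpha r^2}$. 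Using hypothesis $(F1)$ together with $(F2)$ (the latter being essential to give the correct $(\gamma+1)$-homogeneity of the full operator), one bounds
\[
|Dv|^\gamma F(x,D^2v)\;\geq\;(2\alpha c)^{\gamma+1}r^\gamma e^{-(\gamma+1)\alpha r^2}\bigl[2\lambda\alpha r^2-\lambda-\Lambda(N-1)\bigr].
\]
Combined with the elementary bound $(e^{-\alpha r^2}-e^{-\alpha R^2})^{\gamma+1}\leq e^{-(\gamma+1)\alpha r^2}$, the desired subsolution inequality $|Dv|^\gamma F(x,D^2v)-kv^{1+\gamma}\geq 0$ reduces on $\mathcal{A}$ to
\[
(2\alpha)^{\gamma+1}(R/2)^\gamma\bigl[\tfrac{\lambda\alpha R^2}{2}-\lambda-\Lambda(N-1)\bigr]\;\geq\;k,
\]
which holds by choosing $\alpha=\alpha(N,\lambda,\Lambda,\gamma,k,R)$ sufficiently large. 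Note that the homogeneity in $c$ on both sides is identical, so $c$ remains a free normalization parameter.

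Next, since $u$ is continuous and positive in $\Omega$, $m:=\min_{\partial B_{R/2}(y_0)}u>0$, and I can fix $c$ so that $v\leq m$ on $\partial B_{R/2}(y_0)$; then $v\leq u$ on $\partial\mathcal{A}$ (using $v=0$ on $\partial B_R(y_0)$). Applying the comparison principle for $|Du|^\gamma F(x,D^2u)-ku^{1+\gamma}$ — available in this degenerate setting thanks to $(F1)$--$(F2)$, see \cite{BD07} — to the classical strict subsolution $v$ against the viscosity supersolution $u$ yields $u\geq v$ throughout $\mathcal{A}$. Finally, since $v(x_0)=0$ and $v$ is smooth and radial, a Taylor expansion along the inward normal $\nu$ at $x_0$ gives $v(x_0-t\nu)=2\alpha c\,R\,e^{-\alpha R^2}\,t+O(t^2)$, so that $v(x)\geq M\,d_{\partial\Omega}(x)$ in a neighborhood of $x_0$ inside $\mathcal{A}$ for some $M>0$; away from $\partial\Omega$, the inequality $u>M\,d_{\partial\Omega}$ is trivial after possibly shrinking $M$, using that $u$ has a positive lower bound on any compact subset of $\Omega$.

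The main obstacles I anticipate are twofold. First, one must verify that the comparison principle genuinely applies to the degenerate operator $|Du|^\gamma F(x,D^2u)$ with an absorption term of critical homogeneity $u^{\gamma+1}$; this is where the delicate part of the argument sits, since the standard Crandall--Ishii--Lions machinery needs to be adapted to handle the vanishing of $|Du|^\gamma$ (and this is precisely what \cite{BD07} carries out). Second, the barrier computation must be done in the viscosity sense when $v$ is tested against $u$; however, because $v$ is $C^2$ with $Dv\neq 0$ throughout $\mathcal{A}$, the calculation above is pointwise and the viscosity formulation causes no difficulty. Everything else is a routine choice of constants.
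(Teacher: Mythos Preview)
The paper does not actually prove this lemma: it is quoted verbatim as a result of Birindelli--Demengel \cite{BD07}, with no argument supplied. Your proposal supplies what the paper omits, namely the classical Hopf barrier construction in an interior annulus together with the comparison principle for the degenerate operator, and this is precisely the strategy carried out in \cite{BD07}. The computations you give are correct (the $(\gamma+1)$-homogeneity makes the normalization constant $c$ drop out of the subsolution inequality, exactly as you note), and you correctly flag that the only nontrivial ingredient is the comparison principle for $|Du|^\gamma F(x,D^2u)-ku^{1+\gamma}$ in the presence of the gradient degeneracy, which is indeed the content of \cite{BD07}.

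One small point worth tightening: your constant $M$ ultimately depends on $m=\min_{\partial B_{R/2}(y_0)}u$, hence on $u$ itself and not only on $N,\lambda,\Lambda,\gamma,k,R$. The paper's phrase ``structural data'' is loose here; in the Hopf-lemma setting this dependence on $u$ is unavoidable and is implicitly accepted in the subsequent applications (Lemma~\ref{HopfLemma} and the Hopf Maximum Principle), so this is a matter of wording rather than a mathematical gap.
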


As a direct consequence, we are able to prove the following Hopf type lemma:

\begin{lemma}[{\bf Hopf Lemma}]\label{HopfLemma}
Assume $(F1)$ and $(F2)$. Let $u$ be a positive viscosity solution of \\$|Du|^\gamma F(x,D^2u) +a(x)u^{1+\gamma}\leq 0$  in  $\Omega$. If $u(x_0) = 0$ for some $x_0\in\partial\Omega$ such that $\Omega$ satisfies the interior sphere condition at $x_0$, then there exists $M > 0$ (depending only on the structural data) such that
$u(x)> Md_{\partial\Omega}(x)$,
where $d_{\partial\Omega}$ is the distance function to the boundary.
\end{lemma}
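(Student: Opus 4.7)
The plan is to reduce this Hopf-type lemma directly to the Maximum Principle stated immediately above it by absorbing the continuous coefficient $a(x)$ into a single nonnegative constant. Since $a \in C(\overline{\Omega})$, the value $k \defeq \|a^{-}\|_{\infty}$ is finite, and this is the constant I will feed into the previous lemma.

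First I would verify the viscosity-sense reduction. Suppose $\phi \in C^{2}(\Omega)$ touches $u$ from below at $x_{0}\in\Omega$; the hypothesis that $u$ is a supersolution of $|Du|^{\gamma}F(x,D^{2}u)+a(x)u^{1+\gamma}\le 0$ gives
\[
|D\phi(x_{0})|^{\gamma}F(x_{0},D^{2}\phi(x_{0}))+a(x_{0})u(x_{0})^{1+\gamma}\le 0.
\]
Because $u\ge 0$ and $-a(x_{0})\le \|a^{-}\|_{\infty}=k$, we have $-a(x_{0})u(x_{0})^{1+\gamma}\le k\,u(x_{0})^{1+\gamma}$, whence
\[
|D\phi(x_{0})|^{\gamma}F(x_{0},D^{2}\phi(x_{0}))-k\,u(x_{0})^{1+\gamma}\le 0.
\]
This is exactly the test condition showing that $u$ is a viscosity supersolution of $|Du|^{\gamma}F(x,D^{2}u)-k\,u^{1+\gamma}\le 0$ in $\Omega$.

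Second, I would simply invoke the preceding Maximum Principle lemma on $u$ with this constant $k$. All its hypotheses are in place: $u$ is positive in $\Omega$, $u(x_{0})=0$, and $\Omega$ satisfies the interior sphere condition at $x_{0}$. Therefore one gets the desired constant $M>0$, depending on the structural data and on $k=\|a^{-}\|_{\infty}$, such that $u(x)>M\,d_{\partial\Omega}(x)$ near $x_{0}$, which is what had to be shown.

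There is essentially no obstacle here beyond bookkeeping: the entire content of the lemma is packaged into the Maximum Principle quoted just before, and the only thing to check is that a sign-changing bounded coefficient $a(x)$ can be replaced, at the price of enlarging constants, by a fixed $k\ge 0$. Both the hypothesis $q=\gamma+1$ (implicit in the exponent $1+\gamma$) and the nonnegativity of $u$ are crucial for the elementary inequality $-a(x)u^{1+\gamma}\le k\,u^{1+\gamma}$ used above; without either, this cheap reduction would not go through.
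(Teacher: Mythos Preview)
Your proof is correct and follows exactly the same route as the paper: reduce to the preceding Maximum Principle lemma by bounding $-a(x)u^{1+\gamma}$ from above via a single constant times $u^{1+\gamma}$. The only cosmetic difference is that the paper uses $k=\|a\|_\infty$ whereas you use the slightly sharper $k=\|a^{-}\|_\infty$; either choice works.
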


\begin{proof}
It is enough to see that
$$
	|Du|^\gamma F(D^2u)-\|a\|_\infty u^{1+\gamma}\leq |Du|^\gamma F(D^2u)+a(x)u^{1+\gamma}\leq 0.
$$
in the viscosity sense. So the previous Lemma yields the conclusion.
\end{proof}

As a byproduct of Lemma \ref{HopfLemma} and the $C^{1,\alpha}$ regularity estimates up to the boundary \cite{BD2} for solutions of the limiting problem of $(P_{a,q})$ with $q=1+\gamma$, namely,
\begin{equation}\tag{{\bf $\mathrm{P}_{a,1+\gamma}$}}
\left\{
\begin{array}{rcrcl}
|Du|^\gamma F(x, D^2u)+a(x)u^{1+\gamma}(x)& =& 0 & \mathrm{in} & \Omega\\
u &\geq& 0 & \mathrm{in} & \Omega\\
u& = & 0 & \mathrm{on} & \partial\Omega,
\end{array}
\right.
\end{equation}
we are able to deduce a  \textit{Hopf Maximum Principle} (or Hopf-Oleinik type result).

\begin{theorem}[{\bf Hopf Maximum Principle}]
Assume $(F1)$ and $(F2)$. Let $u$ be a non-trivial viscosity solution of $(P_{a,1+\gamma})$.
Then $u \in \mathcal{P}^{\circ}(\Omega)$.
\end{theorem}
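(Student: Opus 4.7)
The goal is to establish three properties simultaneously for a nontrivial $u$ solving $|Du|^\gamma F(x,D^2 u) + a(x) u^{1+\gamma} = 0$: positivity in $\Omega$, $C^1$-regularity up to $\partial\Omega$, and strict negativity of the exterior normal derivative. The plan is to obtain interior positivity via a strong maximum principle argument that reuses the Birindelli--Demengel maximum principle on an interior ball; then to place $u$ in $C^1_0(\overline{\Omega})$ by invoking the boundary $C^{1,\alpha}$ regularity of \cite{BD2}; and finally to use Lemma \ref{HopfLemma} to conclude strict negativity of $\partial u/\partial \nu$ at every boundary point, leveraging the interior sphere condition guaranteed by the smoothness of $\partial\Omega$.

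For the first step, observe that $u \ge 0$ satisfies the differential inequality $|Du|^\gamma F(x,D^2u) - \|a^-\|_\infty u^{1+\gamma} \le 0$ in the viscosity sense, because $-a(x) \le \|a^-\|_\infty$. If $u$ were to vanish at some interior point $x_1$ while remaining nontrivial, then since $\{u>0\}$ is a nonempty open proper subset of $\Omega$, one can find a ball $B_r(y) \subset \{u>0\}$ with $x_1 \in \partial B_r(y)$. Applying the Birindelli--Demengel maximum principle on $B_r(y)$ (which is admissible because $\partial B_r(y)$ obviously satisfies the interior sphere condition at $x_1$) would yield a linear lower growth $u(x) > M\, d_{\partial B_r(y)}(x)$. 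However, since $u$ attains an interior minimum value $0$ at $x_1$ and the interior $C^{1,\alpha}$ estimates of \cite{BD2} apply, we have $Du(x_1)=0$, contradicting the linear growth rate at $x_1$. Hence $u>0$ throughout $\Omega$.

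For the remaining $C^1$-regularity and boundary behavior, we write the equation as $|Du|^\gamma F(x,D^2 u) = g(x)$ with $g(x) := -a(x)u^{1+\gamma}(x) \in C(\overline{\Omega}) \cap L^\infty(\Omega)$, and invoke the global $C^{1,\alpha}$ estimates of \cite{BD2} to obtain $u \in C^{1,\alpha}(\overline{\Omega})$. Then, for any $x_0 \in \partial \Omega$, smoothness of $\Omega$ provides an interior tangent ball at $x_0$, so Lemma \ref{HopfLemma} (applied to $u$, which satisfies $|Du|^\gamma F(x,D^2 u) + a(x) u^{1+\gamma} \le 0$) furnishes a constant $M > 0$ with $u(x) > M\, d_{\partial \Omega}(x)$ in a neighborhood of $x_0$ within $\Omega$. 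Combined with $u(x_0)=0$ and $u \in C^1(\overline{\Omega})$, differentiating along the inward normal direction forces $\partial u/\partial \nu(x_0) \le -M < 0$. Together, these three steps yield $u \in \mathcal{P}^{\circ}(\Omega)$.

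The main obstacle I foresee is step one, the interior positivity. The Hopf-type statement supplied by the paper concerns boundary values and distance to $\partial\Omega$, so one must carefully execute the ``slide an interior ball'' trick and justify, in the viscosity framework, that a nonnegative solution cannot have a null interior minimum in the presence of such a boundary lower bound. This rests essentially on interior $C^{1,\alpha}$ regularity of the equation so that the classical contradiction $Du(x_1)=0$ versus linear growth is legitimate; fortunately, this regularity is standard for the operators under consideration.
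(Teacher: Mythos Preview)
Your proposal is correct and matches the approach the paper sketches: the theorem is stated there without a detailed proof, merely as a ``byproduct of Lemma~\ref{HopfLemma} and the $C^{1,\alpha}$ regularity estimates up to the boundary \cite{BD2}.'' Your three-step argument (interior positivity via the sliding-ball/Birindelli--Demengel trick, global $C^{1,\alpha}$ regularity, then Lemma~\ref{HopfLemma} at each boundary point) fills in precisely these details, including the interior-positivity step that the paper leaves implicit but which is genuinely required since Lemma~\ref{HopfLemma} presupposes $u>0$ in $\Omega$.
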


The next theorem, which plays a fundamental role in proving Theorem \ref{main result 1}, can be found in \cite[Theorem 3.1]{ART15}, \cite[Theorem 1.1]{BD2}, \cite[Theorem 1.1]{BD3}, \cite[Section 8.2]{CC95} and \cite[Theorem 1]{IS}.

\begin{theorem}[{\bf Gradient estimates}]\label{GradThm} Assume $(F1)$, $(F2)$, and $(F3)$. Let $u$ be a bounded viscosity solution of
$$
  |Du|^{\gamma}F(x, D^2u) = f \in L^{\infty}(B_1).
$$
Then, for some universal $\alpha \in (0, 1)$, we have
\begin{equation*}
	  \displaystyle [Du]_{C^{0, \alpha}(B_1)} \leq C\left(N, \gamma, \lambda, \Lambda\right)\left( \|u\|_{L^{\infty}(B_1)}+ \|f\|_{L^{\infty}(B_1)}^{\frac{1}{\gamma+1}}\right).
	\end{equation*}
\end{theorem}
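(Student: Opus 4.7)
The plan is to prove this interior $C^{1,\alpha}$ estimate by the Caffarelli-type \emph{improvement of flatness / compactness--perturbation} scheme, adapted to the gradient-degenerate setting following the strategies of Birindelli--Demengel and Imbert--Silvestre. Thanks to the $(\gamma+1)$-homogeneity of the operator guaranteed by $(F1)$--$(F2)$, the rescaling $v \defeq u/K$ with $K \defeq \|u\|_{L^{\infty}(B_1)} + \|f\|_{L^{\infty}(B_1)}^{1/(\gamma+1)}$ reduces the problem to producing a universal bound $[Dv]_{C^{0,\alpha}(B_{1/2})} \leq C$ for solutions with $\|v\|_\infty \leq 1$ whose right-hand side has $L^\infty$ norm at most $1$. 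A preliminary universal H\"older estimate via the Ishii--Lions doubling-variables technique supplies the compactness needed below.

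The core step is an approximation lemma: by contradiction and the stability of viscosity solutions, for every $\eta>0$ there is $\delta>0$ such that if in addition $\|f\|_\infty \leq \delta$ and the modulus $\omega$ from $(F3)$ is smaller than $\delta$ in a unit ball around a fixed $x_0$, then $v$ is $\eta$-close in $L^\infty(B_{3/4})$ to a viscosity solution $h$ of the frozen-coefficient equation $|Dh + q|^{\gamma} F(x_0, D^2 h) = 0$ for some drift $q \in \R^N$ arising as a limit of rescaled gradients. The profile $h$ enjoys a universal interior $C^{1,\bar\alpha}$ estimate: when $|q|$ dominates $\|Dh\|_{L^\infty}$ one divides by $|Dh + q|^\gamma$ to obtain a uniformly elliptic equation to which Caffarelli's classical theory applies, while for moderate $|q|$ the degenerate regularity of \cite{IS, BD2, BD3} delivers the same conclusion. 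Hence $\sup_{B_\rho}|h - \ell| \leq C_0 \rho^{1+\bar\alpha}$ for some affine $\ell$ and all small $\rho$.

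Choosing $\alpha \in \bigl(0, \min\{\bar\alpha, 1/(1+\gamma)\}\bigr)$ and $\rho$ so that $C_0 \rho^{1+\bar\alpha} \leq \tfrac12 \rho^{1+\alpha}$, and applying the approximation with $\eta = \tfrac12 \rho^{1+\alpha}$, one obtains an affine $\ell_1$ with $\sup_{B_\rho}|v-\ell_1|\leq \rho^{1+\alpha}$. The rescaled solution $v_1(y) \defeq \rho^{-(1+\alpha)}\bigl(v(\rho y) - \ell_1(\rho y)\bigr)$ satisfies, by $(F2)$,
\begin{equation*}
|Dv_1 + \rho^{-\alpha}\nabla \ell_1|^{\gamma}\, F(\rho y, D^2 v_1) \;=\; \rho^{\,1-\alpha(1+\gamma)}\, f(\rho y),
\end{equation*}
whose right-hand side remains universally small since $1 - \alpha(1+\gamma) > 0$. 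Iterating the approximation on $v_1, v_2, \dots$ (with the drift $\rho^{-k\alpha}\nabla \ell_k$ carried along and the same large-$|q|$/moderate-$|q|$ dichotomy invoked at each scale) produces affine $\ell_k$ with $\sup_{B_{\rho^k}}|v-\ell_k|\leq \rho^{k(1+\alpha)}$ and $|\nabla \ell_{k+1}-\nabla \ell_k| \leq C\rho^{k\alpha}$, which combine via a standard discrete-to-continuous interpolation and covering argument to yield the claimed seminorm bound.

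The main obstacle lies in the approximation step: securing a $C^{1,\bar\alpha}$ estimate for the limiting profile $h$ that is \emph{uniform in $q$}. The critical set $\{Dh = -q\}$ obstructs direct Ishii--Lions-type test-function arguments when $|q|$ is moderate, and one must interpolate carefully between the two extreme regimes. This delicate point is exactly where the cited degenerate regularity theory \cite{ART15, BD2, BD3, IS} enters; absorbing (or quoting) that machinery is what makes the whole scheme go through.
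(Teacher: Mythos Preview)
The paper does not supply a proof of this theorem at all: it is listed in the Preliminaries section as a known result, with the sentence ``The next theorem \ldots\ can be found in \cite[Theorem 3.1]{ART15}, \cite[Theorem 1.1]{BD2}, \cite[Theorem 1.1]{BD3}, \cite[Section 8.2]{CC95} and \cite[Theorem 1]{IS}'' immediately preceding its statement. So there is no in-paper proof to compare against.

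Your sketch is a faithful high-level summary of the Imbert--Silvestre scheme (with the Birindelli--Demengel ingredients) that those references actually carry out: the $(\gamma+1)$-homogeneous normalization, a compactness/approximation lemma producing a profile $h$ solving a frozen-coefficient equation with drift, the large-$|q|$ versus moderate-$|q|$ dichotomy to get a uniform $C^{1,\bar\alpha}$ estimate for $h$, and the iteration with $\alpha < \min\{\bar\alpha, 1/(1+\gamma)\}$ so that the rescaled right-hand side stays small. As a roadmap it is correct; the genuine work --- the uniform-in-$q$ regularity for the tilted limit equation and the careful tracking of the drift through the iteration --- is exactly what you defer to \cite{IS, BD2, BD3, ART15}, which is also what the paper does by citation.
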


Finally, let us consider an eigenvalue problem related to our class of operators. Given a smooth bounded domain $\mathcal{D} \subset \mathbb{R}^N$, we  denote by $\left(\lambda^+(\mathcal{D}),\phi^+(\mathcal{D})\right)$ the first eigenpair of the eigenvalue problem
\begin{equation}\label{problema de autovalor}
\left\{
\begin{array}{rclcl}
|D\phi|^{\gamma}F(x,D^2\phi) & =&   -\lambda  \phi^{\gamma+1}& \text{in} & \mathcal{D}\\
\phi &  =&   0& \text{on} & \partial \mathcal{D},
\end{array}\right.
\end{equation}
with $\phi^+(\mathcal{D})>0$ and $\|\phi^+(\mathcal{D})\|_\infty =1$. For the existence of this eigenpair, see for instance \cite{BCPR15}, \cite{BD09}, \cite{Dem09},\cite{FV12}, \cite{IkoIshiii12}, \cite{QS08} and references therein.

\section{Existence of non-trivial solutions:  Proof of Theorem \ref{existencia}}\label{SecExist}

Let us establish the existence of non-trivial solutions to \eqref{MEq}.

\begin{proof}[{\bf Proof of Theorem \ref{existencia}}]

First we construct a sub-solution of \eqref{MEq}.
Consider $\bar{\mathcal{B}}\subset \{a>0\}$ a ball and $(\lambda^+(\mathcal{B}),\phi^+(\mathcal{B}))$ the first eigenpair of the  problem (\ref{problema de autovalor}). We claim that for $\epsilon>0$ sufficiently small $\epsilon\phi^+$ is a sub-solution of
$$
\begin{array}
[c]{lll}
|Du|^\gamma F(x, D^2u)+a(x)u^q(x)=0& \text{in} & \mathcal{B}.\\
\end{array}
$$
Indeed, since $a>0$ on $\bar{\mathcal{B}}$  we have
$$
	\lambda^+\epsilon^{1+\gamma} \phi^+=\lambda^+\epsilon^{1+\gamma-q}(\phi^+)^{1-q}(\epsilon\phi^+)^q\leq a(x)(\epsilon\phi^+)^q \mbox{ in }\mathcal{B},
$$
when $\epsilon>0$ is small enough. In addition,
$$
	|D(\epsilon\phi^+)|^\gamma F(x,D^2(\epsilon\phi^+))=-\epsilon^{1+\gamma}\lambda^+\phi^+.
$$
So we can conclude that
$$
|D(\epsilon\phi^+)|^\gamma F(x,D^2(\epsilon\phi^+))\geq -a(x)(\epsilon\phi^+)^q,
$$
which proves the claim.

Now we are going to prove that the function
$$
\underline{u}_\epsilon=\left\{\begin{array}{rll}
\epsilon\phi^+ & \text{in} & \mathcal{B},\\
 0 & \text{in} & \Omega\setminus \mathcal{B}\\
\end{array}
\right.
$$
is a viscosity sub-solution of \eqref{MEq}. Let $\varphi\in C^2(\Omega)$ touch $\underline{u}_\epsilon$ from above at $x_0$. If $\varphi(x_0)=\underline{u}_\epsilon(x_0)=0$ then, since $0$ is a viscosity solution of \eqref{MEq}, we have
$$
	|D\varphi(x_0)|^\gamma F(x_0, D^2\varphi(x_0))\geq 0=-a(x_0)(\varphi(x_0))^q.
$$
Now if $\varphi(x_0)=\underline{u}_\epsilon(x_0)>0$ then $x_0\in \mathcal{B} $, therefore $\varphi$ touches $\epsilon\phi^+$ from above at $x_0$. Therefore, we have
$$
	|D\varphi(x_0)|^\gamma F(x_0, D^2\varphi(x_0))=-a(x_0)\varphi^q(x_0),
$$
which proves the claim.\\

We look now for a viscosity super-solution of \eqref{MEq}.  To this end, consider $\psi>0$  satisfying
$$
\left\{\begin{array}{rclcl}
 |D\psi|^\gamma F(x, D^2\psi)& = & -\|a\|_\infty& \text{in} & \Omega,\\
 \psi & = & 0 & \text{in} &\partial \Omega.\\
\end{array}
\right.
$$
and $k \gg 1$ such that
$$
	k^{1+\gamma-q}>\psi^q,
$$
so that
$$
|D(k\psi)|^\gamma F(x, D^2(k\psi))=-k^{1+\gamma}\|a\|_\infty\leq -\|a\|_\infty(k\psi)^q.
$$
Since $\|a\|_\infty\geq a(x)$ we have
$$
   |D(k\psi)|^\gamma F(x, D^2(k\psi))\leq -a(x)(k\psi)^q.
$$
Thus $\bar{u}_k=k\psi$ is a super-solution of \eqref{MEq}. As $\underline{u}_\epsilon=0$ in a neighborhood of $\partial \Omega$, we see that taking $\epsilon$ smaller and $k$ larger if necessary,  we have $\underline{u}_\epsilon\leq \bar{u}_k$.

By Lemma \ref{ExistSol} we conclude that there exist a viscosity solution $u$ of \eqref{MEq} such that
$$
	\underline{u}_\epsilon\leq u \leq \bar{u}_k,
$$
and therefore $u\geq 0$ is non-trivial, which proves the theorem.
\end{proof}

\section{Uniqueness of positive solutions: Proof of Theorem \ref{unicidade}}\label{SecUnique}

\hspace{0.6cm}We adapt ideas from \cite{CP09} and \cite{IS} to show that \eqref{MEq} admits at most one positive solution. Nevertheless, the advantage of the method applied here is that we allow $a$ to change sign.

\begin{lemma}\label{change of variables} Let $0<q<\gamma+1$ and $u$ be a positive viscosity solution of
\begin{equation} \label{eu}
	|D u|^\gamma F(x, D^2u)+a(x)u^q(x)=0 \quad \text{in} \quad \Omega.
\end{equation}
Then
$$
	w(x)=\frac{1}{1-\frac{q}{1+\gamma}}u(x)^{1-\frac{q}{1+\gamma}}
$$
is a positive viscosity solution of
\begin{equation} \label{ew}
	|D w|^\gamma F\left(x, D^2w+\left(\frac{q}{1+\gamma-q}\right)\frac{D w\otimes D w}{w}\right)+a(x)=0.
\end{equation}
\end{lemma}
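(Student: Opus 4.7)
The plan is to realize the change of variables at the level of test functions. First, I would record the algebraic identity underlying the statement: setting $\beta \defeq 1-\frac{q}{1+\gamma}=\frac{1+\gamma-q}{1+\gamma}\in(0,1)$, the map $\Phi(t)\defeq \beta^{-1}t^{\beta}$ is a $C^{2}$ strictly increasing bijection of $(0,\infty)$ onto itself with inverse $\Phi^{-1}(s)=(\beta s)^{1/\beta}$, and $w=\Phi\circ u$. For a positive $C^2$ function $v$ with $\varphi \defeq \Phi\circ v$, the chain rule yields
\[
D\varphi = v^{\beta-1}Dv,\qquad D^2 v = v^{1-\beta}\left[D^2\varphi+\tfrac{q}{1+\gamma-q}\,\tfrac{D\varphi\otimes D\varphi}{\varphi}\right],
\]
since $\frac{1-\beta}{\beta}=\frac{q}{1+\gamma-q}$. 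Applying $(F2)$ with the positive scalar $v^{1-\beta}$ and multiplying by $|Dv|^{\gamma}=v^{\gamma(1-\beta)}|D\varphi|^{\gamma}$, together with the identity $(\gamma+1)(1-\beta)=q$, produces the pointwise identity
\[
|Dv|^{\gamma} F(x,D^2 v) \;=\; v^{q}\,|D\varphi|^{\gamma}\, F\!\left(x,\,D^2\varphi+\tfrac{q}{1+\gamma-q}\tfrac{D\varphi\otimes D\varphi}{\varphi}\right).
\]
Dividing by $v^{q}>0$ then transforms the equation satisfied by $v$ into the one for $\varphi$, establishing the lemma at the classical level.

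To promote this to the viscosity framework, I would argue by transferring test functions. Let $\phi\in C^{2}$ touch $w$ from above (respectively, from below) at a point $x_{0}$, and define $\psi \defeq \Phi^{-1}\circ\phi=(\beta\phi)^{1/\beta}$ on a neighborhood $U$ of $x_{0}$ on which $\phi>0$. Such a $U$ exists by continuity: in the from-above case $\phi\geq w>0$ nearby automatically, while in the from-below case we use $\phi(x_{0})=w(x_{0})>0$ together with the locality of viscosity testing. On $U$ the function $\psi$ is $C^{2}$, and since $\Phi^{-1}$ is strictly increasing, $\psi$ touches $u$ from above (respectively, below) at $x_{0}$. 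I would then insert the chain-rule formulas above (with $v=\psi$, $\varphi=\phi$) evaluated at $x_{0}$ into the viscosity inequality satisfied by $u$ — which reads $|D\psi|^{\gamma}F(x_{0},D^{2}\psi)+a(x_{0})\psi^{q}\leq 0$ in the supersolution case and the reverse inequality in the subsolution case — and factor out $\psi(x_{0})^{q}=u(x_{0})^{q}>0$. Dividing by this positive factor yields precisely the required sub/supersolution inequality for $w$ with the original test function $\phi$.

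The only real obstacle is the standard subtlety in such change-of-variable arguments for viscosity solutions: ensuring that the transformed test function $\psi=\Phi^{-1}\circ\phi$ is well defined and $C^{2}$ in an open neighborhood of $x_{0}$. This is automatic when testing from above, but when testing from below it requires combining the strict positivity $w(x_{0})>0$ with the locality of the viscosity notion to replace $\phi$ by its restriction to a small ball where $\phi>0$. Once this is settled, the remainder is a direct chain-rule computation, plus a crucial use of the $1$-homogeneity condition $(F2)$ applied with the \emph{positive} scalar $\psi^{1-\beta}$, which is exactly what allows the factor $\psi^{q}$ to emerge and cancel cleanly against the reaction term $a(x_{0})\psi^{q}$.
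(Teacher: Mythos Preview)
Your proposal is correct and follows essentially the same approach as the paper: transfer a $C^{2}$ test function for $w$ through the inverse of the power map to obtain a $C^{2}$ test function for $u$, apply the viscosity inequality for $u$, then use the chain-rule identities together with the $1$-homogeneity condition $(F2)$ to factor out and cancel $u(x_{0})^{q}>0$. Your additional care in isolating the positivity issue for the transformed test function (automatic when touching from above, handled by continuity and locality when touching from below) simply makes explicit a point the paper passes over with the phrase ``since $u>0$ and $\Phi\in C^{2}(\Omega)$ we see that $\phi\in C^{2}$ in a neighborhood of $x_{0}$''.
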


\begin{proof}
We set $\bar{q}=\frac{q}{1+\gamma}$. Consider $\Phi\in C^2(\Omega)$ a test function for \eqref{ew} that touches $w$ from below at $x=x_0 \in \Omega$, so that
$$
  \phi(x)=[(1-\bar{q})\Phi(x)]^\frac{1}{1-\bar{q}}
$$
touches $u$ from below at $x=x_0$. Since $u>0$ and $\Phi\in C^2(\Omega)$ we see that $\phi\in C^2$ in a neighborhood of $x_0$, so it is a test function for \eqref{eu}. Besides, we remark that
$$
  D \phi(x)=\left[(1-\bar{q})\Phi(x)\right]^{\frac{\bar{q}}{1-\bar{q}}}D \Phi(x)
$$
and
$$
  D^2\phi(x)=[(1-\bar{q})\Phi(x)]^{\frac{\bar{q}}{1-\bar{q}}}\left(D^2\Phi(x)+\left(\frac{\bar{q}}{1-\bar{q}}\right)\frac{D \Phi(x) \otimes D \Phi(x)}{\Phi(x)}\right).
$$
Hence,
$$
	-a(x_0)u^q(x_0)\geq |D \phi(x_0)|^\gamma F(x_0, D^2\phi(x_0)),
$$
and by the $1$-homogeneity of the operator $F$ we have
$$
	-a(x_0)u^q(x_0)\geq [(1-\bar{q})\Phi(x_0)]^{(1+\gamma)\frac{\bar{q}}{1-\bar{q}}}|D \Phi(x_0)|^\gamma F\left(x_0, D^2\Phi(x_0)+\left(\frac{\bar{q}}{1-\bar{q}}\right)\frac{D \Phi(x_0)\otimes D \Phi(x_0)}{\Phi(x_0)}\right).
$$
Thus, using the definition of $\phi $ we obtain
$$
	-a(x_0)[(1-\bar{q})\Phi(x_0)]^\frac{q}{1-\bar{q}}\geq [(1-\bar{q})\Phi(x_0)]^{\frac{(1+\gamma)\bar{q}}{1-\bar{q}}} |D \Phi(x_0)|^\gamma F\left(x_0, D^2\Phi(x_0)+\left(\frac{\bar{q}}{1-\bar{q}}\right)\frac{D \Phi(x_0) \otimes \Phi(x_0)}{\Phi(x_0)}\right).
$$
Finally, as $(1+\gamma)\bar{q}=q$, we conclude that
$$
	-a(x_0)\geq |D \Phi(x_0)|^\gamma F\left( x_0, D^2\Phi(x_0)+\left(\frac{\bar{q}}{1-\bar{q}}\right)\frac{D \Phi(x_0)\otimes \Phi(x_0)}{\Phi(x_0)}\right),
$$
which proves that $w$ is a viscosity super-solution of \eqref{ew}. In a similar way, one proves that $w$ is a viscosity sub-solution of \eqref{ew}.
\end{proof}

With a little help from Lemma \ref{change of variables} we are able to prove our uniqueness result.

\begin{proof}[{\bf Proof of Theorem \ref{unicidade}}]
Suppose that \eqref{eu} has two positive solutions $u_1, u_2$. By Lemma \ref{change of variables}, it follows that \eqref{ew} has two positive solutions $w_1, w_2$. Let $x_0\in \Omega$ be such that
\begin{equation}\label{hipotese de contradicao unicidade}
	\max_{\bar{\Omega}}(w_1-w_2)=(w_1-w_2)(x_0)>0
\end{equation}
and $\Omega_1 $ be an open connected neighborhood of $x_0$ where $w_1-w_2>0$ . We claim that
$$
	\mathcal{M}_{\lambda, \Lambda}^+(D^2(w_1-w_2))\geq 0 \mbox{ in } \Omega_1.
$$
To this end we consider
$$
  \phi(x)=\frac{1}{2}x^T.A.x+bx+c, \quad (A, b, c) \in \left(\mbox{Sym}(N), \R^N, \R\right)
$$
a second order polynomial that touches $w_1-w_2$ from above at $x_1\in \Omega_1 $, i.e. $\phi(x_1)=(w_1-w_2)(x_1)$ and $\phi> w_1-w_2 \,\,\,\mbox{in}\,\,\, B_r\setminus \{x_1\}\subset \Omega_1$ for some $r\ll 1$.
We are going to show that
\begin{equation}\label{desigualdade importante 1 do teorema de unicidade}
	\mathcal{M}_{\lambda, \Lambda}^+(A)\geq 0.
\end{equation}
We consider two cases:

First, if $b\neq 0$ then $w_1-w_2(x_1)-\phi$ has a maximum point at $x_1$, so

$$
	|b|^\gamma F\left(x_1, A+\left(\frac{q}{1+\gamma-q}\right)\frac{b\otimes b}{w_1(x_1)}\right)\geq-a(x_1).
$$
In a similar way, $w_2-w_1(x)+\phi$ has a minimum point at $x_1$, so that
$$
	|b|^\gamma F\left(x_1, -A+\left(\frac{q}{1+\gamma-q}\right)\frac{b\otimes b}{w_2(x_1)}\right)\leq-a(x_1),
$$
Therefore, we obtain
$$
   |b|^\gamma F\left(x_1, A+\left(\frac{q}{1+\gamma-q}\right)\frac{b\otimes b}{w_1(x_1)}\right)-|b|^\gamma F\left(x_1, -A+\left(\frac{q}{1+\gamma-q}\right)\frac{b\otimes b}{w_2(x_1)}\right)\geq 0.
$$
Now, using the condition (\textbf{F1}) we have
$$
|b|^\gamma \mathcal{M}_{\lambda, \Lambda}^+\left(2A+\left(\frac{q}{1+\gamma-q}\right)\left( \frac{1}{w_1(x_1)}-\frac{1}{w_2(x_1)}\right)b\otimes b\right)\geq 0,
$$
and therefore, since $b\neq 0$, we have
$$
\mathcal{M}_{\lambda, \Lambda}^+\left(2A+\left(\frac{q}{1+\gamma-q}\right)\left( \frac{1}{w_1(x_1)}-\frac{1}{w_2(x_1)}\right)b\otimes b\right)\geq 0.
$$
To simplify the notation, we set
$$
	B=\left(\frac{q}{1+\gamma-q}\right)\left( \frac{1}{w_1(x_1)}-\frac{1}{w_2(x_1)}\right)b\otimes b.
$$
We know that $b\otimes b$ is a non-negative symmetric matrix, and by (\ref{hipotese de contradicao unicidade}),
$$
\frac{1}{w_1(x_1)}-\frac{1}{w_2(x_1)}<0,
$$
so $B$ is a non-positive, symmetric matrix.

On the other hand we know that $\mathcal{M}_{\lambda, \Lambda}^+$ is a $(\lambda, \Lambda)$-elliptic operator, so
$$
	\mathcal{M}_{\lambda, \Lambda}^+(2A)+\Lambda\|B^+\|-\lambda\|B^-\|\geq \mathcal{M}_{\lambda, \Lambda}^+(2A+B)\geq 0.
$$
As $B$ is a non-positive, we have $\|B^+\|=0$, and consequently
$$
\mathcal{M}_{\lambda, \Lambda}^+(2A)\geq \lambda\|B^-\|=\lambda\|-B\|.
$$
Thus  $\mathcal{M}_{\lambda, \Lambda}^+(A)\geq 0$, as claimed.

Assume now that $b=0$ and suppose, for the sake of contradiction, that
\begin{equation}\label{hipotese de contradição para o caso quando b=0}
\mathcal{M}_{\lambda, \Lambda}^+(A)<0,
\end{equation}
so that $A$ has at least one non-positive eigenvalue. Let $S$ be the direct sum of the subspaces associated with the non-positive eigenvalues of $A$ and $P_S$ be the orthogonal projection over $S$. Setting
$$
	\psi(x)=\phi(x)-\epsilon|P_Sx|
$$
and recalling that $\phi> w_1-w_2 \,\,\,\mbox{in}\,\,\, B_r\setminus \{x_1\}$, we have that, for $0<\epsilon \ll 1$ small enough, $\psi$ touches $w_1-w_2$ from above at some $x_2\in \overline{B_r(x_1)}$ such that $(w_1-w_2)(x_2)>0$. This fact suggests us to use $\psi$ as a test function at $x_2$. To this end, let us prove that $|P_Sx_2|\neq 0$. In effect, by supposing the opposite, we use that
\begin{equation}\label{definição da norma da transformação linear}
   |P_Sx|=\max_{|e|=1} P_Sx\cdot e
\end{equation}
 to deduce that for all $e \in \mathbb{S}^{N-1}$, the test function
$$
	\phi(x) -\epsilon P_Sx \cdot e
$$
touches $w_1-w_2$ from above at $x_2$. In fact, we use (\ref{definição da norma da transformação linear}) to assure that
$$
	\phi-\epsilon P_Sx \cdot e\geq \phi- \epsilon|P_Sx|.
$$
We also observe  that
$$
-|P_Sx|\leq P_Sx \cdot e \leq |P_Sx|,
$$
so, by the contradiction assumption,
$$
	P_Sx_2 \cdot e=|P_Sx_2|=0.
$$
Therefore, as
$$
	(w_1-w_2)-(\phi- \epsilon|P_Sx|)
$$
has a local minimum at $x_2$, we conclude that
$$
	(w_1-w_2)-(\phi- \epsilon P_Sx \cdot e)
$$
has also a local minimum at $x_2$, which shows that $\phi- \epsilon P_Sx \cdot e$ touches $w_1-w_2$ from above at $x_2$ for all $e \in \mathbb{S}^{N-1}$.

Arguing as above, we conclude that, for all $e \in \mathbb{S}^{N-1}$, we have
$$
	|Ax_2-\epsilon P_Se|^\gamma F\left(x_2, A+\left(\frac{q}{1+\gamma-q}\right)\frac{Ax_2-\epsilon P_Se\otimes Ax_2-\epsilon P_Se}{w_1(x_2)}\right)\geq-a(x_2).
$$
and
$$
	|Ax_2-\epsilon P_Se|^\gamma F\left(x_2, -A+\left(\frac{q}{1+\gamma-q}\right)\frac{Ax_2-\epsilon P_Se\otimes Ax_2-\epsilon P_Se}{w_2(x_2)}\right)\leq-a(x_2).
$$
So
$$
|Ax_2-\epsilon P_Se|^\gamma \mathcal{M}_{\lambda, \Lambda}^+\left(2A+\left(\frac{q}{1+\gamma-q}\right)\left( \frac{1}{w_1(x_2)}-\frac{1}{w_2(x_2)}\right)(Ax_2-\epsilon P_Se)\otimes (Ax_2-\epsilon P_Se)\right)\geq 0.
$$
Since we can choose $\hat{e} \in \mathbb{S}^{N-1}$ such that $Ax_2-\epsilon P_S\hat{e}\neq 0$, and we know that
$$
	\frac{1}{w_1(x_2)}-\frac{1}{w_2(x_2)}<0,
$$
arguing once more as above, we conclude that
$$
	\mathcal{M}_{\lambda, \Lambda}^+(A)\geq0,
$$
which contradicts (\ref{hipotese de contradição para o caso quando b=0}).  Therefore, $|P_Sx_2|\neq 0.$

Now, we use $\psi$ as a test function to obtain
$$
	|Ax_2-\epsilon e_1|^\gamma \mathcal{M}_{\lambda, \Lambda}^+\left(2(A-\epsilon B)+\left(\frac{q}{1+\gamma-q}\right)\left( \frac{1}{w_1(x_2)}-\frac{1}{w_2(x_2)}\right)(Ax_2-\epsilon e_1)\otimes (Ax_2-\epsilon e_1)\right)\geq 0,
$$
where $e_1=\frac{P_Sx_2}{|P_Sx_2|}$ and $B=D^2(|P_Sx_2|)\geq 0$, since $|P_Sx|$ is a convex function. Furthermore,
$$
	(Ax_2-\epsilon e_1).P_Sx_2= P_SAx_2.x_2-\epsilon|P_Sx_2|\leq -\epsilon|P_Sx_2|<0,
$$
so  $(Ax_2-\epsilon e_1)\neq 0$. Using once again $$
	 \frac{1}{w_1(x_2)}-\frac{1}{w_2(x_2)}<0
$$
and arguing as above we conclude that
$$
	  \mathcal{M}_{\lambda, \Lambda}^+(A-\epsilon B)> 0.
$$
Since  $ \mathcal{M}_{\lambda, \Lambda}^+(A)\geq  \mathcal{M}_{\lambda, \Lambda}^+(A-\epsilon B)$ we have that
$$
	\mathcal{M}_{\lambda, \Lambda}^+(A)\geq 0
$$
which contradicts (\ref{hipotese de contradição para o caso quando b=0}). Thus we  conclude that
$$
	\mathcal{M}_{\lambda, \Lambda}^+(D^2(w_1-w_2))\geq 0 \mbox{ in } \Omega_1.
$$
Now, we observe that $w_1-w_2=0$ on $\partial\Omega_1$, so that by the maximum principle
$$
	w_1-w_2\leq 0 \mbox{ in } \Omega_1,
$$
which is a contradiction with (\ref{hipotese de contradicao unicidade}). Therefore we have $w_1\leq w_2$. The converse inequality is obtained similarly.
\end{proof}

\section{Positivity results: Proof of Theorem \ref{main result 1}}\label{SecProofMR}

We start proving that nontrivial solutions of \eqref{MEq} are positive in some component of $\Omega^+$.

\begin{lemma}
Let $u$ be a nontrivial viscosity solution of \eqref{MEq}. Then, there exists a sub-domain $\Omega^{\prime}\subset \Omega^+$ such that $u>0$ in $\Omega^{\prime}$.
\end{lemma}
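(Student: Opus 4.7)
The plan is to locate a point inside $\Omega^+$ where $u$ is strictly positive, and then use continuity of $u$ and openness of $\Omega^+$ to get the desired subdomain. Since $u$ is non-negative, continuous on $\overline{\Omega}$, vanishes on $\partial\Omega$, and is non-trivial, it attains its maximum $U:=\max_{\overline{\Omega}} u > 0$ at some point $x^*\in\Omega$.

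The key step is to show that $a(x^*)\ge 0$, i.e.\ $x^*\in\overline{\Omega^+}$. Testing with the constant function $\phi\equiv U$, one sees that $u-\phi\le 0$ on $\Omega$ with equality at $x^*$, so that $x^*$ is a (global) maximum of $u-\phi$. Applying the viscosity subsolution property of $u$ at $x^*$ yields
$$
|D\phi(x^*)|^{\gamma}F(x^*,D^2\phi(x^*))+a(x^*)u(x^*)^{q}\ge 0.
$$
Since $D\phi(x^*)=0$ and $D^2\phi(x^*)=0$, the first summand vanishes: when $\gamma>0$ this is immediate from $|0|^{\gamma}=0$, while when $\gamma=0$ it follows from the $1$-homogeneity (F2), which gives $F(x^*,0)=0$. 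Therefore $a(x^*)U^{q}\ge 0$, and since $U>0$ we conclude $a(x^*)\ge 0$, i.e.\ $x^*\in\overline{\Omega^+}$.

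Now we finish with two cases. If $x^*\in \Omega^+$, then by continuity of $u$ and the openness of $\Omega^+$ we can take a ball $\Omega'\subset\Omega^+$ centred at $x^*$ on which $u>U/2>0$, and we are done. If instead $x^*\in\partial\Omega^+$, then there exists a sequence $x_n\in\Omega^+$ with $x_n\to x^*$; by continuity $u(x_n)\to U>0$, so $u(x_{n_0})>0$ for some $n_0$. Since $\Omega^+$ is open, we can then choose a ball $\Omega'\ni x_{n_0}$ contained in $\Omega^+$ and small enough that $u>0$ on $\Omega'$ by continuity, finishing the proof.

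The only subtlety in the argument is the handling of the degenerate term $|D\phi|^{\gamma}F(x,D^2\phi)$ at a critical point of the test function, but this is handled by the two elementary observations above ($|0|^{\gamma}=0$ for $\gamma>0$, and $F(x,0)=0$ for $\gamma=0$ by (F2)); apart from this, the proof is just the viscosity maximum principle applied with the cheapest possible test function.
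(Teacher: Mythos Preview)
Your argument shares the paper's core mechanism---testing the equation with a constant at a maximum point to read off the sign of $a$---but the implication ``$a(x^*)\ge 0$, i.e.\ $x^*\in\overline{\Omega^+}$'' is not justified. From $a(x^*)\ge 0$ you only obtain $x^*\in\{a\ge 0\}$, and since the paper merely assumes $a\in C(\overline{\Omega})$ changes sign, the zero set $\{a=0\}$ may well have nonempty interior lying entirely outside $\overline{\Omega^+}$. If the global maximum of $u$ falls at such a point (and nothing in your argument rules this out), then neither of your two concluding cases produces a subdomain of $\Omega^+$: there simply are no nearby points with $a>0$.

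The paper closes exactly this gap by arguing by contradiction. Assuming $u\equiv 0$ on $\Omega^+$, it first disposes of the possibility that $u$ also vanishes on $\{a<0\}$: in that case $a\,u^q\equiv 0$ in $\Omega$, so $u$ solves the homogeneous problem $|Du|^\gamma F(x,D^2u)=0$ with $u=0$ on $\partial\Omega$, which forces $u\equiv 0$ by known uniqueness for this degenerate Dirichlet problem. Hence $\mathcal{A}:=\{a<0\}\cap\{u>0\}\neq\emptyset$, and testing with a constant at a maximum point over $\overline{\mathcal{A}}$ now gives the \emph{strict} sign $-a(x_0)u(x_0)^q>0$, contradicting the vanishing of the left-hand side. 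The ingredient your direct approach is missing is precisely this appeal to uniqueness for the homogeneous problem, which eliminates the scenario in which $\{u>0\}$ hides inside $\{a=0\}$.
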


\begin{proof} Assume, for the sake of contradiction, that $u=0$ in $\Omega^+$. Then $\mathcal{A}=\left\lbrace a<0\right\rbrace\cap \left\lbrace u>0\right\rbrace\neq \emptyset$, since otherwise we would have
$$
\left\{\begin{array}{rclcl}
|Du|^\gamma F(x, D^2u)   =   0 & \text{in} & \Omega,\\
u   =   0 & \text{on} & \partial\Omega,
\end{array} \right.
$$
which would imply $u\equiv 0$. Now, let $x_0\in \overline{\mathcal{A}}$ be such that $\displaystyle u(x_0)=\max_{\overline{\mathcal{A}}} u$ and take $\varphi(x)\equiv u(x_0)$ as a test function. Then the maximum of $u-\varphi$ is achieved at $x_0$, so that
$$
	|D\varphi (x_0)|^\gamma F(x_0, D^2\varphi(x_0) ) \geq -a(x_0)u(x_0)^q>0.
$$
On the other hand, $|D\varphi|^{\gamma}F(x_0,D^2\varphi)\equiv 0$, and we obtain a contradiction.
\end{proof}

The next lemma plays a fundamental role in proving our main results. It provides the existence of a `` barrier from below'', which ensures that the set of nontrivial solutions of \eqref{MEq} with $q \in (0,\gamma +1)$  stays away from zero as long as $a$ is sufficiently positive in some ball of $\Omega$.

Let $\bar{\mathcal{B}} \subset \Omega^+$ and $(\lambda^+(\mathcal{B}),\phi^+(\mathcal{B}))$ the first eigenpair of the  problem (\ref{problema de autovalor}).

\begin{lemma}\label{l2}
Let $q\in (0, \gamma+1)$. If $\displaystyle a_0:=\min_{\overline{\mathcal{B}}}a>\lambda^+(\mathcal{B})>0$ then there exists $\Phi\in C(\overline{\mathcal{B}})$ such that $u\geq \Phi> 0$ in $\mathcal{B}$, for every $u$ satisfying $u>0$ in $\mathcal{B}$ and $|Du|^\gamma F(x, D^2u)+a(x)u(x)^q=0$ in  $\mathcal{B}$.

\end{lemma}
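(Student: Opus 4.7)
The plan is to take $\Phi := \epsilon_0\,\phi^+(\mathcal{B})$ for a constant $\epsilon_0>0$ chosen solely from the structural data, and to obtain $u\geq \Phi$ via a sliding argument against $\phi^+$. As in the proof of Theorem \ref{existencia}, the eigenvalue identity $|D\phi^+|^\gamma F(x,D^2\phi^+)=-\lambda^+(\phi^+)^{\gamma+1}$ shows that $\epsilon\,\phi^+$ is a viscosity sub-solution of the equation in $\mathcal{B}$ whenever
\[
a(x)\ \geq\ \epsilon^{1+\gamma-q}\lambda^+\,(\phi^+)^{1+\gamma-q}(x)\quad \mbox{in } \mathcal{B},
\]
and this inequality is strict for every $\epsilon<\epsilon_*:=(a_0/\lambda^+)^{1/(1+\gamma-q)}$, strictness following from $\|\phi^+\|_\infty=1$ together with $a\geq a_0>\lambda^+$. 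In particular $\epsilon_*>1$, so the hypothesis $a_0>\lambda^+$ yields a positive margin of admissible $\epsilon$.

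For $u$ as in the statement, I would set $\tau^*(u)=\sup\{t>0: t\phi^+\leq u \,\mbox{ on }\,\overline{\mathcal{B}}\}$. That $\tau^*(u)>0$ follows from Lemma \ref{HopfLemma}: on a neighborhood of any boundary zero of $u$ one has $u\leq 1$, hence
\[
|Du|^\gamma F(x,D^2u)+a(x)u^{1+\gamma}\ =\ a(x)u^q\bigl(u^{1+\gamma-q}-1\bigr)\ \leq\ 0,
\]
and Hopf supplies $u(x)\geq Md_{\partial\mathcal{B}}(x)$ there, with $M>0$ depending only on the structural data, while $\phi^+\leq Cd_{\partial\mathcal{B}}$ near $\partial\mathcal{B}$ by the $C^{1,\alpha}$ regularity up to the boundary of $\phi^+$. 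Thus the ratio $u/\phi^+$ stays bounded below on a boundary strip, so small positive $t$ belongs to the defining set. Now fix $\epsilon_0:=\min\{\epsilon_*/2,\,M/C\}$ and argue by contradiction: if $\tau^*(u)<\epsilon_0$, continuity yields $\tau^*\phi^+\leq u$ with equality at some $x^*\in\overline{\mathcal{B}}$, and the computation above shows $\tau^*\phi^+$ is still a strict sub-solution.

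If $x^*\in\mathcal{B}$, then $u(x^*)=\tau^*\phi^+(x^*)>0$; the super-solution property of $u$ with $\tau^*\phi^+$ as a test function from below, paired with the strict sub-solution property of $\tau^*\phi^+$ at the common touching value, yields the classical contradiction --- directly when $\phi^+$ is $C^2$ near $x^*$, and via the Crandall--Ishii doubling-of-variables method otherwise, exploiting that the strict sub-solution margin is uniformly positive in a neighborhood of $x^*$ because $\phi^+(x^*)>0$. If $x^*\in\partial\mathcal{B}$, then $u(x^*)=0$, and the Hopf and Lipschitz bounds deliver
\[
\tau^*\phi^+(x)\ \leq\ \tau^* C d_{\partial\mathcal{B}}(x)\ <\ Md_{\partial\mathcal{B}}(x)\ \leq\ u(x)
\]
in a punctured neighborhood of $x^*$ in $\mathcal{B}$ (using $\tau^*<M/C$); combined with the absence of interior touching from Case I, this allows one to raise $\tau^*$ by a small $\delta>0$ while preserving $t\phi^+\leq u$ throughout $\overline{\mathcal{B}}$, contradicting the maximality of $\tau^*$. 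In either case $\tau^*(u)\geq\epsilon_0$, so $\Phi:=\epsilon_0\phi^+\in C(\overline{\mathcal{B}})$ with $\Phi>0$ in $\mathcal{B}$ is the desired uniform lower bound. The chief technical obstacle I anticipate is Case I: since $\phi^+$ is only known to be $C^{1,\alpha}$, it cannot be used as a classical viscosity test function, and the touching contradiction must be extracted by doubling of variables, carefully localizing where the strict sub-solution margin is bounded away from zero.
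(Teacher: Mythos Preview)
Your approach---slide $\epsilon\phi^+(\mathcal{B})$ against $u$ and derive a contradiction at a touching point from the strict sub-solution property of $\epsilon\phi^+$ when $\epsilon$ lies below the threshold $(a_0/\lambda^+)^{1/(1+\gamma-q)}$---is exactly the paper's. The paper's write-up is shorter because it tacitly uses $u>0$ on the \emph{closed} ball $\overline{\mathcal{B}}$ (which is what actually occurs in every application of the lemma, since there $\overline{\mathcal{B}}$ sits inside a component of $\{a>0\}$ where $u>0$). Under that reading, $\tau^*>0$ is immediate by compactness---no Hopf is needed---and the touching point is automatically interior, so your Case~II never arises; the paper then simply chooses $\Phi=\epsilon_\theta\phi^+$ with $\epsilon_\theta=\bigl(a_0/\lambda^+-\theta\bigr)^{1/(1+\gamma-q)}$ for a fixed small $\theta>0$.

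The extra work you do to cover possible boundary zeros of $u$ carries a genuine risk: despite the wording of Lemma~\ref{HopfLemma}, the Hopf constant $M$ in general depends on the interior infimum of $u$ (scaling $u\mapsto cu$ with $c\to0$ forces $M\to0$), so your choice $\epsilon_0=\min\{\epsilon_*/2,\,M/C\}$ may fail to be independent of $u$, and then $\Phi$ would not be the uniform barrier the lemma asserts. If you want to keep the weaker hypothesis $u>0$ in $\mathcal{B}$, a clean fix is to run the whole sliding argument on a slightly smaller ball $\mathcal{B}'\Subset\mathcal{B}$, where $u>0$ on $\overline{\mathcal{B}'}$ holds automatically. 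On the other hand, you are right to flag that $\phi^+$ is only known to be $C^{1,\alpha}$ and hence cannot be inserted as a classical test function for $u$; the paper writes the touching inequality ``in the viscosity sense'' without further comment, and your proposed doubling-of-variables remedy is indeed the standard way to make that step rigorous.
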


\begin{proof}
Let $\theta>0$ be such that $a_0(\lambda^+)^{-1}-\theta>1$, where $\lambda^+=\lambda^+(\mathcal{B})$.
We consider $\epsilon\phi$, with $\phi=\phi^+(\mathcal{B})$ and
\begin{equation}\label{eub}
  \displaystyle \epsilon<\epsilon_\theta \defeq \left\{a_0(\lambda^+)^{-1}-\theta\right\}^{\frac{1}{\gamma+1-q}}.
\end{equation}
Note that by our choice of $\theta$ we have $\epsilon_\theta >1$ for every $q \in (0, \gamma+1)$. Moreover,
\begin{equation}\label{1}
	|D(\epsilon\phi)|^{\gamma}F(D^2\epsilon\phi)=-\lambda^+ (\epsilon\phi)^{\gamma+1}> -a(x)(\epsilon\phi)^q+\theta\lambda^+(\epsilon\phi)^q \quad \text{in } \mathcal{B}
\end{equation}
whenever $0<\epsilon\leq \epsilon_\theta$. Let $u$ be a positive solution of $|Du|^\gamma F(x, D^2u)= -a(x)u(x)^q$ in  $\mathcal{B}$. We claim that $u\geq\epsilon_\theta \phi$ in $\mathcal{B}$. Indeed, otherwise we choose $x_0\in \mathcal{B}$ such that $\epsilon_\theta \phi(x_0)>u(x_0)$. Now, since $u>0$ on $\overline{\mathcal{B}}$ and $\phi=0$ on $\partial \mathcal{B}$ we can find some $s_0\in(0,1)$ such that $s_0\epsilon_\theta \phi(x_0)=u(x_0)$ and $s_0\epsilon_\theta \phi\leq u$ in $\mathcal{B}$. Thus, the function
$$
	\Theta(x)\defeq s_0\epsilon_\theta \phi(x)
$$
touches $u$ from below at $x_0$. Consequently, we have, in the viscosity sense,
$$
	|D\Theta(x_0)|^{\gamma}F(D^2\Theta(x_0))\leq  -a(x_0)\Theta(x_0)^q.
$$
On the other hand, by \eqref{1} we have
\begin{eqnarray*}
  |D\Theta(x_0)|^{\gamma}F(D^2\Theta(x_0))&=&|D\Theta(x_0)|^{\gamma} \mathcal{M}_{\lambda, \Lambda}^+(D^2 s_0\epsilon_\theta \phi(x_0))\\
  &>&-a(x_0)(s_0\epsilon_\theta \phi(x_0))^q+\theta\lambda^+s_0\epsilon_\theta \phi(x_0)
\end{eqnarray*}
Moreover, since $\Theta(x_0)=s_0\epsilon_\theta \phi(x_0)$, we obtain
$$
	-a(x_0)(s_0\epsilon_\theta \phi(x_0))^q>-a(x_0)(s_0\epsilon_\theta \phi(x_0))^q+\theta\lambda^+s_0\epsilon_\theta \phi(x_0),
$$
which provides a contradiction. Therefore  $u \geq \Phi := \epsilon_\theta \phi$ in $\mathcal{B}$.
\end{proof}

\begin{remark}
The previous lemma holds under the condition
\begin{equation}\label{condicao do infa0}
\displaystyle a_0:=\min_{\overline{\mathcal{B}}}a>\lambda^+(\mathcal{B})>0.
\end{equation}
Let us show that, in the proof of Theorem \ref{main result 1}, we can always assume such hypothesis. In fact, since $F$ is an $1$-homogeneous operator, we see that given $c>0$, $u$ solves $(P_{a,q})$ if and only if
$$
\tilde{u}(x) \defeq c^{\frac{1}{1+\gamma-q}}u(x)
$$
solves $(\mathrm{P}_{ca,q})$. Hence, for $\mathfrak{c}$ sufficiently large, we have
$$
\displaystyle a_0:=\min_{\overline{\mathcal{B}}}\mathfrak{c}a> \lambda^{+}(\mathcal{B}).
$$
Therefore (\ref{condicao do infa0}) holds for $(\mathrm{P}_{ca,q})$, i.e. for $ca$ with $c$ large enough. Since $u$ is positive if and only if $\tilde{u}$ is positive, we see that the above condition can be assumed without loss of generality.
\end{remark}

\begin{proof}[{\bf Proof of Theorem \ref{main result 1}}]
\strut
\begin{enumerate}
\item Suppose, by contradiction, that there exists a sequence $a_k \in C(\overline{\Omega})$ with
$$
	\displaystyle a_k^{+}=a^{+} \quad \text{and} \quad a_k^- \to 0 \text{ in } C(\overline{\Omega}),
$$
and $(P_{a_k,q})$ has viscosity solutions $u_k$ such that $u_k \not \in \mathcal{P}^{\circ}$ for every $k$. Since $\Omega_+$ has finitely many connected components, we can assume that $u_k >0$ in some sub-domain $\Omega^{\prime} \subset \Omega$ for every $k$.
We  split the analysis in two cases;

\begin{enumerate}
  \item First suppose that $\|u_k\|_{L^{\infty}(\Omega)}< C$, for all $k$.
In this case, from $C^{1, \alpha}$ regularity estimates (see \cite[Theorem 3.1]{ART15} \cite{BD2},\cite[Theorem 1]{IS}), we obtain that
$$
[u_k]_{C^{1, \alpha}(\Omega)}\leq C(N, \lambda, \Lambda, \gamma, \Omega)\left[\|u_k\|_{L^{\infty}(\Omega)}+ \|a_ku_k^q\|^{\frac{1}{\gamma+1}}_{L^{\infty}(\Omega)}\right]
$$
and
$$
\begin{array}{cl}
  u_k \to u_0 & \text{locally uniformly in} \,\,\, C^{1, \alpha}(\Omega), \\
  F_k \to F_0 &  \text{locally uniformly in} \,\,\, \text{Sym}(N),\\
  a_k^{-} \to 0 & \text{locally uniformly in} \,\,\, \Omega.
\end{array}
$$
Thus, we have in the viscosity sense
$$
   |D u_0(x)|^{\gamma}F_0(x, D^2 u_0)  +a^{+}(x)u_0^{q}(x)=0  \quad \text{in} \quad \Omega
$$
Furthermore, by the Strong Maximum Principle we infer that either $u_0 \in \mathcal{P}^{\circ}(\Omega)$ or
$u_0\equiv 0$.
However, by Lemma \ref{l2}, we know that there exists $\Phi$ such that, for every $k \in \mathbb{N}$,
$$
   u_k \geq \Phi>0 \quad \text{in} \quad \mathcal{B} \subset supp(a^{+}).
$$
Hence, we conclude that $u_0 \in \mathcal{P}^{\circ}(\Omega)$. Thus, since $u_k \to u_0$ locally uniformly in $C^{1, \alpha}(\Omega)$ we see that  $u_k\in \mathcal{P}^{\circ}(\Omega)$ for $k$ sufficiently large,  which provides a contradiction and completes the proof in this case.\\

  \item Suppose now that $\|u_k\|_{L^{\infty}(\Omega)}\rightarrow \infty$ as $k \to \infty$.
We set $v_k(x)=\frac{u_k(x)}{\|u_k\|_{L^{\infty}(\Omega)}}$, so that
$$
Dv_k=\frac{Du_k}{\|u_k\|_{L^{\infty}(\Omega)}}. 
$$
By the $1$-Homogeneity condition (see \ref{F2}), we have
$$
 |Dv_k|^\gamma F(x,D^2v_k)=\frac{1}{\|u_k\|_{L^{\infty}(\Omega)}^{1+\gamma}} |Du_k|^\gamma F(x,D^2u_k)=-\frac{a(x)u_k^q}{\|u_k\|_\infty^{1+\gamma}}=-\frac{a(x)v_k^q}{\|u_k\|_{L^{\infty}(\Omega)}^{1+\gamma-q}}.
$$
in the viscosity sense. By arguing as above, we conclude that
$$
\begin{array}{cl}
  v_k \to v_0 & \text{locally uniformly in} \,\,\, C^{1, \alpha}(\Omega) \\
\end{array}
$$
Moreover,  $\|u_k\|_{L^{\infty}(\Omega)}^{1+\gamma-q}\rightarrow \infty$ and $a(x)v_k^q$ remains bounded. Therefore,
$$
	|Dv_0|^\gamma F(x,D^2v_0)=0.
$$
From the Strong Maximum Principle, we infer either $v_0 \in \mathcal{P}^{\circ}(\Omega)$ or $v_0\equiv 0$. Since  $\|v_k\|_{L^{\infty}(\Omega)}=1$ we rule out the second possibility, and therefore
$v_0 \in \mathcal{P}^{\circ}(\Omega)$.
As $v_k \to v_0$ locally uniformly in $C^{1, \alpha}(\Omega)$, for $k$ sufficiently large, we infer again that $v_k \in \mathcal{P}^{\circ}(\Omega)$ and thus $u_k \in \mathcal{P}^{\circ}(\Omega)$, which yields another contradiction.
\end{enumerate}

\item Let us assume, for the sake of contradiction, that there are sequences $\{q_k\} \subset \R_{+}$ such that
$q_k \to (\gamma+ 1)^-$
and $\{u_k\}_{k \in \mathbb{N}}$ are nontrivial viscosity solutions of $(\mathrm{P}_{a,q_k})$ with $u_k \not \in \mathcal{P}^{\circ}$ for every $k$. Since $\Omega_+$ has finitely many connected components, we can assume that $u_k>0$ in some sub-domain $\Omega^{\prime} \subset \Omega$ for every $k$.

As in the previous item, we consider two cases:

\begin{enumerate}
  \item  First we assume that $\{u_k\}_{k \in \mathbb{N}}$ is bounded in $L^{\infty}(\Omega)$.
 Then, as in the previous argument, $\{u_k\}_{k \in \mathbb{N}} $ is bounded in $C^{1,\alpha}(\overline{\Omega})$ for some $\alpha \in (0,1)$. By the Arzel\`{a}-Ascoli compactness criteria, up to a subsequence, we obtain that
$$
  u_k\rightarrow u_0 \quad \text{in} \quad C^{1,\alpha}(\overline{\Omega}).
$$
By Lemma \ref{l2}, we know that
$$
u_k>C \quad \text{in} \quad \mathcal{B} \subset supp(a^{+}).
$$
so that
$$\
u_0>C \quad \text{in} \quad \mathcal{B} \subset supp(a^{+}).
$$
for some constant $C>0$, i.e. $u_0 \not \equiv 0$. Moreover, by stability, $u_0$ solves $(\mathrm{P}_{a,1+\gamma})$. The Hopf Maximum Principle, yields that $u_0 \in \mathcal{P}^{\circ}(\Omega)$, and since $u_k\rightarrow u_0$ in $C^{1,\alpha}(\overline{\Omega})$, it follows that $u_k\in \mathcal{P}^{\circ}(\Omega)$ for $k$ large enough, which yields a contradiction.\\

  \item Now, suppose that $\|u_k\|_\infty\rightarrow\infty$ and set
$$
v_k=\frac{u_k}{\|u_k\|_\infty},
$$
which solves
$$|D v_k|^{\gamma}F(x, D^2v_k)=-\frac{a(x)v_k^{q_k}}{\|u_k\|_\infty^{(\gamma+1)-q_k}} \text{ in } \Omega, \quad v_k=0 \text{ on } \partial \Omega.
$$
By the Arzel\`{a}-Ascoli theorem, we have, up to a subsequence,
$v_k\rightarrow v_0
$ in $C^{1,\alpha}(\overline{\Omega})$.

Once again, we must analyse two possibilities:

\begin{itemize}
  \item  If $\|u_k\|_\infty^{(\gamma+1)-q_k}\rightarrow +\infty$
then  $v_0$ solves
$$
	|D v_0|^{\gamma}F(x, D^2v_0)=0, \text{ in } \Omega, \quad v_0=0 \text{ on } \partial \Omega.
$$
Since $v_0 \not \equiv 0$, we deduce, again by the Hopf Maximum Principle, that $v_0 \in \mathcal{P}^{\circ}(\Omega)$, so that $v_k \in \mathcal{P}^{\circ}(\Omega)$ for $k$ large enough, which yields a contradiction.
  \item If $\|u_k\|_\infty^{(\gamma+1)-q_k}$ is bounded
then
$$
	\|u_k\|_\infty^{(\gamma+1-q_k)}\rightarrow C_0.
$$
Note that $C_0\geq 1$ since $\|u_k\|_\infty>>1$, so $v_0$ solves
$$
|D v_0|^{\gamma}F(x, D^2v_0)=-\frac{a(x)v_0^{\gamma+1}}{C_0}, \text{ in } \Omega, \quad v_0=0 \text{ on } \partial \Omega.
$$
As $v_0 \not \equiv 0$, by the Hopf Maximum Principle, $v_0 \in \mathcal{P}^{\circ}(\Omega)$, so that $v_k \in \mathcal{P}^{\circ}(\Omega)$ for $k$ large enough, which yields another contradiction. The proof is now complete.
\end{itemize}
\end{enumerate}
\end{enumerate}
\end{proof}

To conclude this paper we provide a simple example of a problem having a dead-core solution:

\begin{example} Let $0\leq q < \gamma+1$ and $0\leq \gamma <2q$. Consider $v: [0, \pi] \to \R $ given by
$$
v(x) = \frac{\sin^r(x)}{r}
$$
where $r \defeq \frac{\gamma+2}{\gamma+1-q}$. Then, it is easy to check that

$$
	v'(x)=\sin^{r-1}x.\cos x \mbox{ and } v''(x)=(r\cos^2x-1)\sin^{r-2}x.
$$
As $r>2$ we have that $v\in C^2([0,\pi])$, so we can extend $v$ smoothly like $0$ in $(-\pi/2,0]$.
Therefore
$$
  -|v^{\prime}(x)|^{\gamma}v^{\prime \prime}(x) = a(x)v^q(x) \quad \text{in} \quad (-\pi/2, \pi)
$$
in the viscosity sense, where
$$
   a(x) = r^q|\cos(x)|^{\gamma}\left[-r\cos^2(x)+1\right]
$$
is a sign-changing weight. Clearly $v$ is a solution of \eqref{MEq} with dead core. Thus
$$
	\delta_0< |a(0)|=r^q(r-1).
$$
\end{example}


\subsection*{Acknowledgments}
This work was partially supported by Consejo Nacional de Investigaciones Cient\'{i}ficas y T\'{e}cnicas (CONICET-Argentina) and Coordena\c{c}\~{a}o de Aperfei\c{c}oamento de Pessoal de N\'{i}vel Superior (PNPD-CAPES-UnB-Brazil). J.V. da Silva would like to thank the Universidad de Buenos Aires for providing an excellent working environment during his stay there, and, Universidade Federal de Sergipe by the best scientific atmosphere during his visit where part of this manuscript was written.

\end{document}